\newlength{\defbaselineskip}
\theoremstyle{plain}
\newtheorem{theorem}{Theorem}[section]
\newtheorem{lemma}[theorem]{Lemma}
\newtheorem{case}{Case}
\theoremstyle{definition}
\newtheorem{example}[theorem]{Example}
\newtheorem{definition}[theorem]{Definition}
\newtheorem{remark}[theorem]{Remark}
\begin{document}
	
	\title[Quiver Grassmannians of the Kronecker quiver]{An explicit description for quiver Grassmannians of the Kronecker quiver}
	\author{Ulysses alvarez }
	\address{Department of Mathematics, University of Alabama,
		Tuscaloosa, AL 35487, U.S.A. 
	}
	\email{uaalvarez@ua.edu}
	
	\author{Kyungyong Lee}
	\address{Department of Mathematics, University of Alabama,
		Tuscaloosa, AL 35487, U.S.A. 
		and Korea Institute for Advanced Study, Seoul 02455, Republic of Korea}
	\email{kyungyong.lee@ua.edu; klee1@kias.re.kr}
	

	\begin{abstract}
		It is an open problem to find cell decompositions of quiver Grassmannians associated to each cluster variable. We initiate a new approach to this problem by giving an explicit description for each individual subrepresentation. In this paper, we illustrate our approach for the Kronecker quiver.
		In \cite{ALLS}, this approach will play a crucial role in studying cell decompositions of quiver Grassmannians for the Markov quiver.
	\end{abstract}
	
	\thanks{We were supported by the University of Alabama, Seoul National University, Korea Institute for Advanced Study, and the NSF grants DMS 2042786 and DMS 2302620. }
	
	\maketitle
	
	\section{Introduction}

	Cluster algebras have been introduced by Fomin and Zelevinsky in \cite{FZ} in the context of total positivity and canonical bases in Lie theory. Since then cluster algebras have been shown to be related to various fields in mathematics including representation theory of finite dimensional algebras, Teichm\"uller theory, Poisson geometry, combinatorics, Lie theory, tropical geometry and mathematical physics.
	
	A (skew-symmetric) cluster algebra is a subalgebra of a field of rational functions, given by specifying a set of generators, the so-called \emph{cluster variables}. These generators are constructed in a recursive way, starting from the initial variables $x_1,x_2,\ldots,x_N$, by a procedure called \emph{mutation}, which is determined by the choice of a skew-symmetric $N\times N$ integer matrix $B$ or, equivalently, by a quiver $Q$.

	Any cluster variable is a Laurent polynomial in an initial cluster with integer coefficients \cite{FZ}, and moreover these coefficients are positive.

	\begin{theorem}[\cite{LS4,GHKK,Dav}]\label{positivity}
		The coefficients of any cluster variable in the Laurent expansion are positive integers.
	\end{theorem}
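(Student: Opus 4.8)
The plan is to follow the scattering-diagram approach of \cite{GHKK}, which yields the statement in full generality. To the exchange matrix $B$ one attaches a consistent scattering diagram $\mathfrak{D}$ in $\mathbb{Z}^N\otimes\mathbb{R}$: a locally finite collection of codimension-one walls, each decorated by a wall-crossing function in a completed polynomial ring, subject to the requirement that the path-ordered product of wall-crossings around any loop be trivial. First I would construct $\mathfrak{D}$ by the standard order-by-order algorithm, beginning with the incoming walls read off from $B$ and repeatedly adjoining walls to kill the discrepancy of path-ordered products around codimension-two strata. The structural output I need is the dictionary of \cite{GHKK} identifying cluster monomials, and in particular cluster variables, with \emph{theta functions} $\vartheta_q$ indexed by lattice points $q$, the cluster variables being exactly those $\vartheta_q$ whose index is a $g$-vector.

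The next step is to expand each $\vartheta_q$ as a sum over \emph{broken lines}: piecewise-linear paths in $\mathbb{Z}^N\otimes\mathbb{R}$ that carry a Laurent monomial and may bend only upon crossing a wall, the bend multiplying the carried monomial by one of the monomials of that wall's function. The coefficient of a given Laurent monomial in $\vartheta_q$ is then, by construction, a finite sum of the monomial coefficients accumulated along all broken lines of that exponent ending at a fixed generic basepoint. Consequently, positivity of the Laurent expansion of every cluster variable reduces to a single assertion: that $\mathfrak{D}$ may be chosen so that every wall-crossing function has nonnegative coefficients, for then each broken line contributes a nonnegative integer and no cancellation can occur.

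The main obstacle, and the technical heart of the matter, is exactly this positivity of the scattering diagram. I would establish it by first passing to principal coefficients, from which the general case follows by the standard separation-of-additions reduction, and then showing that the wall functions lie in the subsemiring generated by monomials with nonnegative coefficients. Positivity is checked directly for rank two, where the scattering functions admit an explicit description, and is then propagated to arbitrary rank through the order-by-order construction: each newly adjoined wall function is forced by a codimension-two consistency relation governed by a rank-two sub-diagram, so that the positive subsemiring is preserved at every order. The delicacy here is bookkeeping the infinitely many walls and verifying that consistency can always be restored without introducing negative coefficients.

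Finally, to tie the argument to the viewpoint of the present paper, I would record the parallel representation-theoretic route available in the skew-symmetric case. The Caldero--Chapoton cluster character expresses each cluster variable as $\sum_{e}\chi\bigl(\mathrm{Gr}_e(M)\bigr)\,\mathbf{x}^{\,\beta(e)}$, a sum over dimension vectors $e$ weighted by Euler characteristics of quiver Grassmannians $\mathrm{Gr}_e(M)$ of a suitable representation $M$. From this formula positivity is equivalent to the nonnegativity of these Euler characteristics, and exhibiting cell decompositions that make that nonnegativity manifest is precisely the program initiated in the body of this paper.
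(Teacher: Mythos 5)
The first thing to say is that the paper contains no proof of this statement: Theorem~\ref{positivity} is quoted as a known result, attributed to \cite{LS4,GHKK,Dav}, and is used only as an input (together with Theorem~\ref{dwz}) to motivate the expectation that quiver Grassmannians of cluster variables admit cell decompositions. So the only meaningful comparison is between your sketch and the cited literature. What you wrote is a recognizable outline of the Gross--Hacking--Keel--Kontsevich proof, which is indeed one of the three cited sources, and its architecture is the right skeleton of that argument: build the consistent scattering diagram order by order, identify cluster monomials with theta functions indexed by $g$-vectors, expand theta functions over broken lines, and reduce positivity of Laurent coefficients to positivity of the wall-crossing functions.

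Two caveats. First, every genuinely hard step is deferred to \cite{GHKK}: consistency, the cluster-monomials-are-theta-functions theorem, finiteness of the broken-line sums, and above all positivity of the wall functions are each substantial theorems there. Your one-sentence justification of the last point (``checked directly for rank two, where the scattering functions admit an explicit description'') is not accurate as stated: rank-two cluster scattering functions admit no closed form in general (this is already nontrivial for the Kronecker quiver and hopeless in wild rank-two cases), and positivity is instead proved by an induction on order using the perturbation trick and the change-of-lattice trick, which reduce every codimension-two consistency computation to the elementary two-wall diagram with functions $1+t_1z^{m_1}$ and $1+t_2z^{m_2}$. Second, your closing paragraph is not an alternative proof route: in the logic of this paper the implication runs the other way --- Theorem~\ref{positivity} combined with the Derksen--Weyman--Zelevinsky formula (Theorem~\ref{dwz}) yields positivity of the Euler characteristics, and proving that nonnegativity independently (e.g.\ via cell decompositions) is exactly the open problem the paper is addressing, so invoking the Caldero--Chapoton formula here would be circular. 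It is also worth recording that the other two cited proofs are genuinely different in method: \cite{LS4} is an elementary combinatorial argument for skew-symmetric cluster algebras, and \cite{Dav} proves the quantum version via cohomological Hall algebra techniques; your sketch covers only the \cite{GHKK} route.
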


	\subsection{Cell decompositions of quiver Grassmannians}
	For a given quiver $Q$, a \emph{quiver representation} $M$ over $Q$ is a collection of vector spaces attached to vertices and linear maps attached to arrows. The collection of the dimensions of the vector spaces is called the \emph{dimension vector} of $M$. 
	For a given quiver representation $M$ over $Q$, a \emph{quiver Grassmannian} $\text{Gr}(M)$ (resp. $\text{Gr}_{\mathbf{e}}(M)$)  is a projective variety parametrizing subrepresentations of $M$ (resp. with the dimension vector $\mathbf{e}$).
	
	Derksen--Weyman--Zelevinsky showed the following:
	
	\begin{theorem}[\cite{DWZ2}]\label{dwz}
		Each coefficient in the Laurent expression of every cluster variable is equal to the Euler characteristic of (the union of) the corresponding quiver Grassmannian(s).
	\end{theorem}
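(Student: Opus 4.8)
The plan is to realize each cluster variable representation-theoretically and then match two combinatorial invariants — the $g$-vector and the $F$-polynomial — against their purely combinatorial counterparts defined by Fomin--Zelevinsky. First I would attach to every cluster variable a decorated representation $M$ of the Jacobian algebra of a quiver with potential (the categorification of the seed), so that the Euler characteristics of its quiver Grassmannians assemble into the \emph{$F$-polynomial}
\[
F_M(y_1,\dots,y_n)=\sum_{\mathbf{e}}\chi\bigl(\text{Gr}_{\mathbf{e}}(M)\bigr)\,y_1^{e_1}\cdots y_n^{e_n},
\]
where $\chi$ denotes the topological Euler characteristic. The theorem will follow once this geometrically-defined $F_M$ is identified with the combinatorial $F$-polynomial of the corresponding cluster variable.

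Second, I would invoke the Fomin--Zelevinsky separation-of-additions formula \cite{FZ}, which expresses any cluster variable as $x^{\mathbf{g}}\cdot F(\hat y_1,\dots,\hat y_n)$, the product of the $g$-vector monomial with the $F$-polynomial evaluated at the hat-variables $\hat y_j$. Under this factorization, the assertion that each Laurent coefficient equals an Euler characteristic of the relevant quiver Grassmannian is \emph{equivalent} to the single polynomial identity $F_M = F$, together with the agreement of $g$-vectors. Thus the whole theorem is reduced to a statement comparing two families of invariants attached to the same seed data.

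The heart of the argument — and the step I expect to be the main obstacle — is showing that the representation-theoretic $g$-vectors and $F$-polynomials obey the \emph{same} mutation recursion as the combinatorial ones. For the $F$-polynomials this is a geometric identity relating $\text{Gr}_{\mathbf{e}}(M)$ to the quiver Grassmannians of the mutated representation $\mu_k(M)$. I would establish it by stratifying each quiver Grassmannian according to the ranks of the induced maps at the mutated vertex $k$, and then comparing Euler characteristics stratum by stratum, using additivity of $\chi$ under decomposition into locally closed pieces and its multiplicativity along the fibration-type pieces produced by the mutation of decorated representations. Controlling these strata and verifying that the alternating contributions reassemble into exactly the Fomin--Zelevinsky $F$-polynomial recursion is the technically delicate part, as it requires the full mutation theory of \cite{DWZ2} for quivers with potentials.

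Finally, I would close the argument by induction on the number of mutations from the initial seed. In the initial seed each representation is either zero or a negative simple, so $F_M=1$ and $\mathbf{g}=\pm e_i$, matching the initial combinatorial data exactly. Since both families satisfy the identical recursion established in the previous step, they coincide after every finite sequence of mutations, hence for every cluster variable. Combining this with the separation formula yields the claimed equality of each Laurent coefficient with the Euler characteristic of the corresponding quiver Grassmannian, completing the proof.
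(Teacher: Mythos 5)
This theorem is imported by the paper directly from \cite{DWZ2} and is never proved in the text, so the only meaningful comparison is with the Derksen--Weyman--Zelevinsky argument itself; your sketch (decorated representations of a quiver with potential, representation-theoretic $F$-polynomials and $g$-vectors, the separation-of-additions formula, mutation invariance proved by stratifying Grassmannians and using additivity/multiplicativity of $\chi$, then induction from the negative simples at the initial seed) is a faithful outline of exactly that proof. The only point worth noting is that when the exchange matrix is singular, distinct dimension vectors $\mathbf{e}$ can yield the same Laurent monomial, which is why the statement speaks of the Euler characteristic of the \emph{union} of the corresponding quiver Grassmannians; your reduction to $F_M=F$ should be phrased to absorb that caveat.
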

	
	This theorem and the Positivity Theorem (Theorem~\ref{positivity}) together imply that the Euler characteristics of quiver Grassmannians coming from cluster variables are positive. When the Euler characteristic of a topological object, which naturally arises in representation theory, is positive, it is commonly expected that the object admits a cell decomposition.
	It is an important open problem to find a cell decomposition of each quiver Grassmannian coming from a cluster variable, if it exists.
	This problem has been answered for Dynkin quivers and tame quivers of types $\tilde{A}$ or $\tilde{D}$  (see \cite{CiE,LoWe} and references therein),  and rank 2 quivers \cite{RuWe}.  To give an answer to this problem for other quivers, we initiate a new approach by giving an explicit description for each and every individual subrepresentation. In this paper, we illustrate our approach for the Kronecker quiver. In \cite{ALLS}, this approach will play a crucial role in studying cell decompositions of quiver Grassmannians for the Markov quiver.
	
	\noindent\textbf{Acknowledgements.} We thank Li Li and Ralf Schiffler for very helpful discussions.

	\section{Quiver Grassmannians}
	Let $Q$ be the \emph{Kronecker quiver}, that is,
	$$\begin{tikzpicture}
		\node at (0,0) {$2$};
		\draw[->]{(0.5,0.1)--(1.5,0.1)};
		\node at (2,0) {$1$};
		\draw[->]{(0.5,-0.1)--(1.5,-0.1)};
	\end{tikzpicture}.$$
	The \textit{cluster algebra} $\mathcal{A}(Q)$ associated to $Q$ is the $\mathbb{Z}$-subalgebra of
	the ambient field $\mathbb{Q}(x_1,x_2)$ generated by the \textit{cluster variables}
	$x_m$ $(m \in \mathbb{Z})$ defined recursively by the relations
	\begin{center}
		$x_{m-1}x_{m+1}=x_m^2+1.$
	\end{center}
	Call $\{x_1,x_2\}$ the \textit{initial cluster} of $\mathcal{A}(Q)$. 
	
	Throughout the paper, for two integers $i_1,i_2\in \mathbb{Z}$, let $[i_1,i_2]=\{i\in \mathbb{Z}\, :\,  i_1\le i\le i_2  \}$.  
	
	Given $Q$ as described above, a (finite-dimensional) \textit{representation} $M$ over the field $\mathbb{C}$ is given by assigning 
	a finite dimensional $\mathbb{C}$-vector  space $M_i$ to each vertex $i \in [1,2]$ and linear maps $\phi_1,\phi_2\colon M_2 \to M_1$
	to every arrow from 2 to 1. Let $\text{rep}(Q)$ be the set of all representations of $Q$. 
	The \textit{dimension vector} (or  \textit{dimension} for short) of $M$ is the vector $(d_1,d_2)$ where the dimension of $M_i$ is $d_i$.
	A \textit{subrepresentation} $N$ of $M$ is a collection of subspaces $N_i \subseteq M_i$ such that
	$\phi_j(N_2) \subseteq N_1$ for all $i,j \in [1,2]$.
	For $\mathbf{e}=(e_1,e_2)\in \mathbb{Z}_{\ge 0}^2$, 
	the \textit{quiver Grassmannian} Gr$_\mathbf{e}(M)$ is defined to be the variety of all subrepresentations of $M$ with dimension $\mathbf{e}$.
	
	Let $\chi_\mathbf{e}(M)$ denote the \textit{Euler-Poincar\'e characteristic}
	of Gr$_\mathbf{e}(M)$.
	As in \cite{CZ}, we associate to any representation $M\in \text{rep}(Q)$ with dimension vector $(d_1,d_2)$
	a Laurent polynomial $X_M(x_1,x_2)\in\mathbb{Q}(x_1,x_2)$ given by
	$$X_M(x_1,x_2)=x_1^{-d_1}x_2^{-d_2} 
	\sum\limits_{\mathbf{e}=(e_1,e_2)\in \mathbb{Z}_{\ge 0}^2} \chi_\mathbf{e}(M)x_1^{2(d_2-e_2)}x_2^{2e_1}.$$
	
	For $m \in \mathbb{Z}_{\ge 3}$, it is easy to see that the denominator vector of the cluster variable $x_m$ is equal to $(m-2,m-3) \in \mathbb{Z}^2$. Let $\mathbf{I}_d$ be the $d \times d$ identity matrix for any $d\in \mathbb{Z}_{\ge 0}$.
	Let $M(m)\in \text{rep}(Q)$ be  the preprojective\footnote{In this paper, we focus on preprojective representations. For preinjective representations, one can employ a strategy similiar to the one given here.} representation 
	\begin{center}
		\begin{tikzpicture}
			\node at (-.3,0) {$\mathbb{C}^{m-3}$};
			\draw[->]{(0.5,0.1)--(1.7,0.1)};
			\node at (2.5,0) {$\mathbb{C}^{m-2}$};
			\node at (1.1,.7) {$\phi_1$};
			\draw[->]{(0.5,-0.1)--(1.7,-0.1)};
			\node at (1.1,-.7) {$\phi_2$};
		\end{tikzpicture}
	\end{center}
	where
	$$\aligned
	\phi_1((z_1,...,z_{m-3}))&=(z_1,\dots,z_{m-3})\left[\mathbf{I}_{m-3} | \mathbf{0}\right];\\
	\phi_2((z_1,...,z_{m-3}))&=(z_1,\dots,z_{m-3})\left[\mathbf{0} | \mathbf{I}_{m-3} \right].
	\endaligned$$
	Here $\mathbf{0}$ is the $(m-3)\times 1$ zero matrix.

	For each $n\in \mathbb{Z}_{\ge 0}$, let 
	$$
	\mathcal{I}_n = \{(i_1,\dots,i_{2n})\in [0,m-3]^{2n} \, : \, i_1\le i_2 < i_3\le i_4 < \cdots < i_{2n-1} \le i_{2n}\}
	$$
	and
	$$
	\mathcal{I}=\cup_{n\ge 0} \mathcal{I}_n.
	$$
	Caldero and Zelevinsky \cite{CZ} showed the following:
	\begin{theorem}\label{cz}
		For every $m\in \mathbb{Z}_{\ge 3}$, the cluster variable $x_m$ is equal to $X_{M(m)}(x_1,x_2)$. More precisely, 
		$$   \chi_{(e_1,e_2)}(M(m))=|\{(i_1,...,i_{2n})\in\mathcal{I}\, :\, e_1=n+\sum_{j=1}^n (i_{2j}-i_{2j-1})\text{ and }e_2=\sum_{j=1}^n (i_{2j}-i_{2j-1}) \}|. $$
		Moreover, for each $P\in \mathcal{I}$ with $e_1=n+\sum_{j=1}^n (i_{2j}-i_{2j-1})\text{ and }e_2=\sum_{j=1}^n (i_{2j}-i_{2j-1})$, there exists a corresponding cell on  $\text{Gr}_{(e_1,e_2)}$.
	\end{theorem}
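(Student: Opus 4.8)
The plan is to pass to a polynomial model and then stratify each $\mathrm{Gr}_{(e_1,e_2)}(M(m))$ by the leading-degree data of a subrepresentation. Identifying $M_2\cong\mathbb{C}[t]_{\le m-4}$ and $M_1\cong\mathbb{C}[t]_{\le m-3}$ (sending the basis vector at position $i$ to $t^{i-1}$), the maps become $\phi_1=\mathrm{incl}$ and $\phi_2=(\text{multiplication by }t)$, so a subrepresentation is exactly a pair of subspaces $(N_1,N_2)$ with $N_2+tN_2\subseteq N_1$, $\dim N_2=e_2$, and $\dim N_1=e_1$. For a subspace $V\subseteq\mathbb{C}[t]$ write $\deg V\subseteq\mathbb{Z}_{\ge0}$ for its set of leading degrees (the pivot set of the reduced echelon form in the monomial basis), so $|\deg V|=\dim V$. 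I would partition the quiver Grassmannian into the locally closed strata $C_{(S_1,S_2)}=\{(N_1,N_2):\deg N_1=S_1,\ \deg N_2=S_2\}$ and prove that each nonempty stratum is an affine space indexed by a unique element of $\mathcal I$.

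First I would pin down which pairs $(S_1,S_2)$ occur. Since $N_2\subseteq N_1$ forces $S_2\subseteq S_1$, and since $v\in N_2$ of leading degree $r$ gives $tv\in N_1$ of leading degree $r+1$, one also gets $S_2+1\subseteq S_1$; conversely these two containments (inside $[0,m-3]$ and $[0,m-4]$) are realized by the explicit \emph{monomial model} $N_2^0=\bigoplus_j\langle t^{i_{2j-1}},\dots,t^{i_{2j}-1}\rangle$, $N_1^0=\bigoplus_j\langle t^{i_{2j-1}},\dots,t^{i_{2j}}\rangle$. I would then set up the bijection between admissible pairs $(S_1,S_2)$ and tuples $P=(i_1,\dots,i_{2n})\in\mathcal I$: the ``tops'' $S_1\setminus S_2$ are the $i_{2j}$, and since each maximal run of consecutive integers in $S_2$ ends at some $r$ with $r+1\in S_1\setminus S_2$, every such run is immediately followed by a top, which recovers the matching ``bottoms'' $i_{2j-1}$. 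A short check shows this respects $e_1=n+\sum(i_{2j}-i_{2j-1})$ and $e_2=\sum(i_{2j}-i_{2j-1})$, so counting strata will reproduce the Caldero--Zelevinsky formula once the affineness step is in place.

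The heart is to coordinatize a single stratum. The key lemma is that $W:=N_2+tN_2$ has $\deg W=S_2\cup(S_2+1)$, hence constant dimension, as $N_2$ ranges over the Schubert cell $\{\deg N_2=S_2\}$: every degree in $S_2\cup(S_2+1)$ is attained by some $v_r$ or $tv_r$, and no other degree can occur. Writing $S_1=(S_2\cup(S_2+1))\sqcup\{\sigma_1,\dots,\sigma_p\}$, the ``extra tops'' $\sigma_\ell$ are exactly the single-point intervals of $P$, and $N_1=W\oplus\langle u_{\sigma_1},\dots,u_{\sigma_p}\rangle$ for uniquely determined reduced vectors $u_\sigma=t^\sigma+\sum_{k\in[0,\sigma-1]\setminus S_1}\gamma_{\sigma,k}t^k$. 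I would then exhibit the explicit parametrization sending the free scalars $\beta_{r,k}$ (entries of the reduced echelon basis of $N_2$, for $r\in S_2$ and $k\in[0,r-1]\setminus S_2$) together with the $\gamma_{\sigma,k}$ to the subrepresentation $(N_2+tN_2+\sum_{\ell}\langle u_{\sigma_\ell}\rangle,\,N_2)$, and claim it is an isomorphism from an affine space onto $C_{(S_1,S_2)}$.

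The main obstacle is upgrading this last map from a bijection to an isomorphism of varieties: the space $W$ that one must reduce each $u_\sigma$ modulo depends on the $\beta$'s, so I have to verify that the reduced-echelon extraction of $N_1$ modulo $W$ yields the $\gamma_{\sigma,k}$ as \emph{regular} functions of $(N_1,N_2)$, with support confined to $\{\sigma\}\cup([0,\sigma-1]\setminus S_1)$. Because $\deg W$ is constant on the base cell, the elimination against the pivots of $W$ and against the lower $\sigma$'s is triangular, so I expect this to reduce to finite, invertible bookkeeping, giving $C_{(S_1,S_2)}\cong\mathbb{A}^{d_P}$ with $d_P=\#\{\beta\}+\#\{\gamma\}$. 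Granting this, the strata form a cell decomposition of $\mathrm{Gr}_{(e_1,e_2)}(M(m))$ indexed by the $P\in\mathcal I$ satisfying the stated dimension constraints; taking Euler--Poincar\'e characteristics (each affine cell contributes $1$) gives the displayed count, and the affine cells themselves furnish the asserted cells on $\mathrm{Gr}_{(e_1,e_2)}$.
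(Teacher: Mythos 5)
Your stratification and its indexing by $\mathcal I$ are sound, and they essentially coincide with the paper's own cells (your $\beta$'s play the role of its $x$-variables, your extra tops $\sigma_\ell$ of its set $\mathcal A(P)$, your $\gamma$'s of its $y$-variables); but the ``key lemma'' on which your whole coordinatization rests is false. The assertion that ``no other degree can occur'' in $\deg(N_2+tN_2)$ is valid only when the spanning vectors $v_r,tv_r$ ($r\in S_2$) have pairwise distinct leading degrees; whenever $S_2$ contains two consecutive integers --- that is, whenever some $i_{2j}-i_{2j-1}\ge 2$ --- the vectors $tv_r$ and $v_{r+1}$ share the leading degree $r+1$, and combinations of them can acquire new, lower leading degrees. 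Concretely, take $S_2=\{1,2\}$ and $N_2=\operatorname{span}\{t,\ t^2+1\}$ (so $\beta_{1,0}=0$, $\beta_{2,0}=1$): then $v_2-tv_1=1$, hence $W=N_2+tN_2=\operatorname{span}\{1,t,t^2,t^3\}$ has degree set $\{0,1,2,3\}$ and dimension $4$, not $3$. Consequently a stratum does not fiber over the whole Schubert cell. For example, with $m=7$ and $P=(1,3)$ (so $S_1=\{1,2,3\}$, $(e_1,e_2)=(3,2)$), a pair $(N_1,N_2)$ with $\deg N_2=\{1,2\}$ lies in $C_{(S_1,S_2)}$ if and only if $\beta_{2,0}+\beta_{1,0}^2=0$, and then $N_1=W$ is forced: the stratum is $\mathbb A^1$, not the $\mathbb A^2$ your parameter count predicts, and for generic $(\beta_{1,0},\beta_{2,0})$ your parametrization does not even produce a point of $\mathrm{Gr}_{(3,2)}(M(7))$, since $\dim W=4$. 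So the claimed isomorphism $C_{(S_1,S_2)}\cong\mathbb A^{\#\beta+\#\gamma}$ fails, along with the cell dimensions it predicts; your argument is correct only in the special case where every $i_{2j}-i_{2j-1}\le 1$.

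The missing step is not a technicality: it is exactly the content of the paper. The stratum is the locus in your parameter space where the rank of your spanning set of $N_1$ (equivalently, of the matrix $N_1(P)$) drops to $e_1(P)$, i.e.\ the zero locus of the determinantal ideal $\mathcal J(P)$, and the relations this imposes on the $\beta$'s and $\gamma$'s are genuinely nonlinear (in the example above, $\beta_{2,0}+\beta_{1,0}^2=0$). The strata are nonetheless affine spaces, but only because of the special shape of these relations: the paper's main theorem identifies $\mathcal J(P)$ with the explicit ideal $\mathcal K(P)$ generated by the $\widehat{D^P_{j,k}}$, $(j,k)\in JK$, and it is the triangular structure recorded in Remark~\ref{rem} --- each generator has a linear part $L^P_{j,k}$ whose variables occur in no nonlinear term of that generator, and the induced relation on $JK$ is a partial order --- that permits the elimination of Remark~\ref{remark20240119}, yielding cells of dimension $|W(P)|-|JK|$ rather than $|W(P)|$. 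Your proposal thus reproduces the easy half of the argument (the stratification and its bijection with $\mathcal I$) and asserts away the hard half; to repair it you would have to identify the defining equations of $\{\deg(N_2+tN_2)\subseteq S_1\}$ inside the Schubert cell and show they can be solved for distinguished coordinates, which is what Sections 3--6 of the paper do. Finally, note that the first assertion of Theorem~\ref{cz}, the identity $x_m=X_{M(m)}(x_1,x_2)$, is not addressed by any cell count; it needs a separate argument, e.g.\ induction on the exchange relation $x_{m-1}x_{m+1}=x_m^2+1$.
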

	
	In this paper, we  give an explicit description for each individual subrepresentation of $M(m)$ (see Remark~\ref{remark20240119}). As a corollary, we recover Theorem~\ref{cz}.

	\section{The matrices $N_2(P)$ and $N_1(P)$}\label{N2PN1P}
	In what follows, fix $m\in \mathbb{Z}_{\ge 3}$ and $P=(i_1,\dots,i_{2n})\in\mathcal{I}$. Let $e_1=e_1(P)=n+\sum_{j=1}^n (i_{2j}-i_{2j-1})$ and $e_2=e_2(P)=\sum_{j=1}^n (i_{2j}-i_{2j-1})$. 
	In this section we define matrices $N^{(2)}(P)$ and $N^{(1)}(P)$, as the first step of constructing all subrepresentations on the cell of  $\text{Gr}_{(e_1,e_2)}M(m)$ corresponding to $P$. The row spaces of these matrices will be used to  determine an $e_2$-dimensional  subspace of $\mathbb{C}^{m-3}$ and an $e_1$-dimensional  subspace of $\mathbb{C}^{m-2}$ in $M(m)$.
	
	Consider the polynomial ring $R=\mathbb{C}[\mathcal{X}]$, where 
	$\mathcal{X}=\{x(a,b) \, :\, a,b\in [1,m-3]\}\cup\{y(a,b) \, :\, a,b\in [1,m-2]\}$
	is algebraically independent. 
	For each $P=(i_1,\dots,i_{2n})\in\mathcal{I}$, 
	we will define the corresponding matrix $N^{(k)}(P)$ as follows. 
	By convention, let $i_{-1}=i_0=0$ and $i_{2n+1}=m-3$.
	
	For each pair $(\nu,\mu) \in [0,n]^2$, let $S_{\nu,\mu}$ be the following $(i_{2\nu}-i_{2\nu-1})\times (i_{2\mu+1}-i_{2\mu})$ matrix
	$$
	\left[ {\begin{array}{cccc}
			x(i_{2\nu-1}+1,i_{2\mu}+1) & x(i_{2\nu-1}+1,i_{2\mu}+2) & \cdots & x(i_{2\nu-1}+1,i_{2\mu+1})\\
			x(i_{2\nu-1}+2,i_{2\mu}+1) & x(i_{2\nu-1}+2,i_{2\mu}+2) & \cdots & x(i_{2\nu-1}+2,i_{2\mu+1})\\
			\vdots & \vdots & \ddots &\vdots\\
			x(i_{2\nu},i_{2\mu}+1) & x(i_{2\nu},i_{2\mu}+2) & \cdots & x(i_{2\nu},i_{2\mu+1})\\
	\end{array} } \right].$$
	Let $\widehat{S}_{\nu,\mu}$ be the $(i_{2\nu}-i_{2\nu-1})\times (i_{2\mu+1}-i_{2\mu-1})$ matrix 
	defined by 
	$$
	\left\{
	\begin{array}{ll}
		\begin{pmatrix}\mathbf{0}|S_{\nu,\mu}\end{pmatrix},& \text{ if }\nu<\mu;\\
		\begin{pmatrix}\mathbf{I}_{i_{2\nu}-i_{2\nu-1}}|S_{\nu,\mu}\end{pmatrix},
		& \text{ if } \nu=\mu;\\
		\text{the zero matrix},& \text{ if }\nu>\mu.\\
	\end{array}
	\right.
	$$
	Here $\mathbf{0}$ is the $(i_{2\nu}-i_{2\nu-1}) \times (i_{2\mu}-i_{2\mu-1})$ zero matrix.
	Let ${N}^{(2)}_{\nu,\mu}$ be the $(i_{2\nu+1}-i_{2\nu-1})\times (i_{2\mu+1}-i_{2\mu-1})$ matrix 
	defined by 
	$$
	\begin{pmatrix}\widehat{S}_{\nu,\mu}\\ \mathbf{0}\end{pmatrix}.
	$$
	Here $\mathbf{0}$ is the $(i_{2\nu+1}-i_{2\nu}) \times (i_{2\mu+1}-i_{2\mu-1})$ zero matrix. 

	\begin{definition}
		For each $P=(i_1,\dots,i_{2n})\in\mathcal{I}$, we define $N^{(2)}(P)$ by
		$$
		\begin{pmatrix}
			N^{(2)}_{0,0} & N^{(2)}_{0,1} & \cdots & N^{(2)}_{0,n}\\
			N^{(2)}_{1,0} & N^{(2)}_{1,1} & \cdots & N^{(2)}_{1,n}\\
			\vdots & \vdots & \ddots &\vdots\\
			N^{(2)}_{n,0} & N^{(2)}_{n,1} & \cdots & N^{(2)}_{n,n}\\
		\end{pmatrix}.
		$$
		
		Note that $N^{(2)}(P)$ is an $(m-3)\times(m-3)$ upper triangular matrix with entries in $R$.

		Let $\overline{N}^{(2)}(P)=N^{(2)}(P)(\mathbf{I}_{m-3} | \mathbf{0})$ and $\underline{N^{(2)}}(P)=N^{(2)}(P)(\mathbf{0}| \mathbf{I}_{m-3} )$, where $\mathbf{0}$ is the $(m-3)\times 1$ zero matrix. Let $\underline{\overline{N^{(2)}}}(P) = \begin{pmatrix}\overline{N^{(2)}}(P)\\ \underline{N^{(2)}}(P) \end{pmatrix}$. 
		Note that $\underline{\overline{N^{(2)}}}(P) $ is a $2(m-3) \times (m-2)$ matrix with entries in $R$.
		
		The row of $N^{(2)}(P)$ that contains $x(a,b)$ as an entry is called the row $a$.  
		The corresponding row in $\overline{N^{(2)}}(P)$ and the corresponding row in $\underline{N^{(2)}}(P)$
		will be written as $\overline{a}$ and $\underline{a}$, respectively. The corresponding rows in $\underline{\overline{N^{(2)}}}(P) $ will be still denoted by $\overline{a}$ and $\underline{a}$, respectively. 
		
		The column of $\overline{N^{(2)}}(P) $ that contains $x(a,b)$ will be called the column $b$, and the column of $\underline{N^{(2)}}(P) $ that contains $x(a,b-1)$ will be called the column $b$. The corresponding column in $\underline{\overline{N^{(2)}}}(P) $ will be still denoted by $b$.

	\end{definition}
	
	\begin{example}
		Let $m=11$. 
		Let $P=(0,3,4,7)\in \mathcal{I}$, 
		that is, $i_1=0$, $i_2=3$, $i_3=4$ and $i_4=7$. 
		
		For simplicity, for $a,b\in [1,m-3]$, let $x_{a,b}=x(a,b)\in R$.
		Note that $N^{(2)}_{\nu,\mu}$ is empty if $\min(\nu,\mu)=0$, and
		$$N^{(2)}_{1,1}=
		\begin{pmatrix}
			1 & 0 & 0 & x_{1,4}\\
			0 & 1 & 0 & x_{2,4}\\
			0 & 0 & 1 & x_{3,4}\\
			0 & 0 & 0& 0
		\end{pmatrix},
		\quad
		N^{(2)}_{1,2}=
		\begin{pmatrix}
			0 & 0 & 0 & x_{1,8}\\
			0 & 0 & 0 & x_{2,8}\\
			0 & 0 & 0 & x_{3,8}\\
			0 & 0 & 0& 0
		\end{pmatrix},
		$$
		$$N^{(2)}_{2,1}=
		\begin{pmatrix}
			0 & 0 & 0 &       0\\
			0 & 0 & 0 &       0\\
			0 & 0 & 0 &       0\\
			0 & 0 & 0& 0
		\end{pmatrix},
		\quad
		N^{(2)}_{2,2}=
		\begin{pmatrix}
			1 & 0 & 0 & x_{5,8}\\
			0 & 1 & 0 & x_{6,8}\\
			0 & 0 & 1 & x_{7,8}\\
			0 & 0 & 0& 0
		\end{pmatrix}.
		$$
		So
		$$N^{(2)}(P)=
		\begin{pmatrix}
			1 & 0 & 0 & x_{1,4} & 0 & 0& 0 & x_{1,8} \\
			0 & 1 & 0 & x_{2,4} & 0 & 0& 0 & x_{2,8} \\
			0 & 0 & 1 & x_{3,4} & 0 & 0& 0 & x_{3,8} \\
			0 & 0 & 0& 0&0 & 0 & 0& 0\\
			0 & 0 & 0 & 0 & 1 & 0& 0 & x_{5,8} \\
			0 & 0 & 0 & 0 & 0 & 1& 0 & x_{6,8} \\
			0 & 0 & 0 & 0 & 0 & 0& 1 & x_{7,8} \\
			0 & 0 & 0& 0&0 & 0 & 0& 0
		\end{pmatrix}.$$
		Then we get
		$$\underline{\overline{N^{(2)}}}(P) =
		\begin{pmatrix}
			1 & 0 & 0 & x_{1,4} & 0 & 0& 0 & x_{1,8} &0\\
			0 & 1 & 0 & x_{2,4} & 0 & 0& 0 & x_{2,8} &0\\
			0 & 0 & 1 & x_{3,4} & 0 & 0& 0 & x_{3,8} &0\\
			0 & 0 & 0& 0&0 & 0 & 0& 0&0\\
			0 & 0 & 0 & 0 & 1 & 0& 0 & x_{5,8} &0\\
			0 & 0 & 0 & 0 & 0 & 1& 0 & x_{6,8} &0\\
			0 & 0 & 0 & 0 & 0 & 0& 1 & x_{7,8}&0 \\
			0 & 0 & 0& 0&0 & 0 & 0& 0&0\\
			0&1 & 0 & 0 & x_{1,4} & 0 & 0& 0 & x_{1,8} \\
			0&0 & 1 & 0 & x_{2,4} & 0 & 0& 0 & x_{2,8} \\
			0&0 & 0 & 1 & x_{3,4} & 0 & 0& 0 & x_{3,8} \\
			0 & 0 & 0& 0&0 & 0 & 0& 0&0\\
			0&0 & 0 & 0 & 0 & 1 & 0& 0 & x_{5,8} \\
			0&0 & 0 & 0 & 0 & 0 & 1& 0 & x_{6,8} \\
			0&0 & 0 & 0 & 0 & 0 & 0& 1 & x_{7,8} \\
			0 & 0 & 0& 0&0 & 0 & 0& 0&0\\
		\end{pmatrix}
		$$
		The row $
		\begin{pmatrix}
			1 & 0 & 0 & x_{1,4} & 0 & 0& 0 & x_{1,8} &0\\
		\end{pmatrix}
		$ will be labeled by $\overline{1}$, the row $
		\begin{pmatrix}
			0 & 1 & 0 & x_{2,4} & \cdots\\
		\end{pmatrix}
		$  will be labeled by $\overline{2}$, and so on.
		Similarly, the row 
		$\begin{pmatrix}
			0&1 & 0 & 0 & x_{1,4} & 0 & 0& 0 & x_{1,8} \\
		\end{pmatrix}$
		will be labeled by $\underline{1}$, and so on.
		
	\end{example}
	
	\begin{example}
		Let $m=31$ and $P=(2,5,9,15,18,22)$. Then $N^{(2)}(P)$ looks like the following.
		$${\tiny\begin{array}{cccccccccccccccccccccccccccc}
				0&0&0&0&0&0&0&0&0&0&0&0&0&0&0&0&0&0&0&0&0&0&0&0&0&0&0&0\\
				0&0&0&0&0&0&0&0&0&0&0&0&0&0&0&0&0&0&0&0&0&0&0&0&0&0&0&0\\
				0&0&1&0&0&*&*&*&*&0&0&0&0&0&0&*&*&*&0&0&0&0&*&*&*&*&*&*\\
				0&0&0&1&0&*&*&*&*&0&0&0&0&0&0&*&*&*&0&0&0&0&*&*&*&*&*&*\\
				0&0&0&0&1&*&*&*&*&0&0&0&0&0&0&*&*&*&0&0&0&0&*&*&*&*&*&*\\
				0&0&0&0&0&0&0&0&0&0&0&0&0&0&0&0&0&0&0&0&0&0&0&0&0&0&0&0\\
				0&0&0&0&0&0&0&0&0&0&0&0&0&0&0&0&0&0&0&0&0&0&0&0&0&0&0&0\\
				0&0&0&0&0&0&0&0&0&0&0&0&0&0&0&0&0&0&0&0&0&0&0&0&0&0&0&0\\
				0&0&0&0&0&0&0&0&0&0&0&0&0&0&0&0&0&0&0&0&0&0&0&0&0&0&0&0\\
				0&0&0&0&0&0&0&0&0&1&0&0&0&0&0&*&*&*&0&0&0&0&*&*&*&*&*&*\\
				0&0&0&0&0&0&0&0&0&0&1&0&0&0&0&*&*&*&0&0&0&0&*&*&*&*&*&*\\
				0&0&0&0&0&0&0&0&0&0&0&1&0&0&0&*&*&*&0&0&0&0&*&*&*&*&*&*\\
				0&0&0&0&0&0&0&0&0&0&0&0&1&0&0&*&*&*&0&0&0&0&*&*&*&*&*&*\\
				0&0&0&0&0&0&0&0&0&0&0&0&0&1&0&*&*&*&0&0&0&0&*&*&*&*&*&*\\
				0&0&0&0&0&0&0&0&0&0&0&0&0&0&1&*&*&*&0&0&0&0&*&*&*&*&*&*\\
				0&0&0&0&0&0&0&0&0&0&0&0&0&0&0&0&0&0&0&0&0&0&0&0&0&0&0&0\\
				0&0&0&0&0&0&0&0&0&0&0&0&0&0&0&0&0&0&0&0&0&0&0&0&0&0&0&0\\
				0&0&0&0&0&0&0&0&0&0&0&0&0&0&0&0&0&0&0&0&0&0&0&0&0&0&0&0\\
				0&0&0&0&0&0&0&0&0&0&0&0&0&0&0&0&0&0&1&0&0&0&*&*&*&*&*&*\\
				0&0&0&0&0&0&0&0&0&0&0&0&0&0&0&0&0&0&0&1&0&0&*&*&*&*&*&*\\
				0&0&0&0&0&0&0&0&0&0&0&0&0&0&0&0&0&0&0&0&1&0&*&*&*&*&*&*\\
				0&0&0&0&0&0&0&0&0&0&0&0&0&0&0&0&0&0&0&0&0&1&*&*&*&*&*&*\\
				0&0&0&0&0&0&0&0&0&0&0&0&0&0&0&0&0&0&0&0&0&0&0&0&0&0&0&0\\
				0&0&0&0&0&0&0&0&0&0&0&0&0&0&0&0&0&0&0&0&0&0&0&0&0&0&0&0\\
				0&0&0&0&0&0&0&0&0&0&0&0&0&0&0&0&0&0&0&0&0&0&0&0&0&0&0&0\\
				0&0&0&0&0&0&0&0&0&0&0&0&0&0&0&0&0&0&0&0&0&0&0&0&0&0&0&0\\
				0&0&0&0&0&0&0&0&0&0&0&0&0&0&0&0&0&0&0&0&0&0&0&0&0&0&0&0\\
				0&0&0&0&0&0&0&0&0&0&0&0&0&0&0&0&0&0&0&0&0&0&0&0&0&0&0&0\\
		\end{array}}$$
		Here if the entry in the $a$-th row and the $b$-th column is marked with $*$, then it is equal to $x_{a,b}$.
	\end{example}
	
	
	Next we want to define $N^{(1)}(P)$. 
	Define $\mathcal{A}(P)=\{i_{2j}: j \in [1,n] \text{ and } i_{2j}-i_{2j-1}=0\}$ and 
	$\mathcal{B}(P)=\{\ell \in [i_{2j-1}+1,i_{2j}]:j \in [1,n]\}.$

	Let $\{\mathbf{e}_k: 1 \leq k \leq m-2\}$ be the standard basis for $\mathbb{C}^{m-2}$. 
	Let $S'(P)$ be the $|\mathcal{A}(P)|\times (m-2)$ matrix where the $j$-th row is equal to
	$$\left\{\begin{array}{ll}
		\mathbf{0}, & \text{ if }i_{2j} \not\in \mathcal{A}(P);\\
		\mathbf{e}_{i_{2j}+1}+\sum\limits_{
			\substack{\ell=i_{2j}+2\\ \ell \notin \mathcal{A}(P)\cup \mathcal{B}(P)}}^{m-2} 
		y(i_{2j}+1,\ell)\mathbf{e}_\ell, & \text{ if }i_{2j}\in \mathcal{A}(P).\end{array}\right.$$
	
	Define
	$N^{(1)}(P) = \begin{pmatrix}\underline{\overline{N^{(2)}}}(P)\\ S'(P) \end{pmatrix}$.
	
	\begin{definition}
		For each $i\in\{1,2\}$, let $N_i(P)$ be the matrix obtained from $N^{(i)}(P)$ by removing zero rows.
	\end{definition}
	
	\begin{remark}
		When the variables are replaced with complex numbers, $N_2(P)$ will become a reduced row Echelon form.
	\end{remark}
	
	The row of $N_1(P)$ that contains $y(a,b)$ as an entry will be labeled by $a'$.  
	The column of $N_1(P) $ that contains $y(a,b)$ will be denoted by $b$.
	
	\begin{example}\label{example20240124}
		Let $P=(0,2,3,3,4,4,5,6)$ and $m-3=8$.
		Then     $$N_1(P)=
		\left(\begin{array}{ccccccccc}
			1 & 0 & x_{1,3} & x_{1,4} & x_{1,5} & 0 & x_{1,7} & x_{1,8} & 0\\
			0 & 1 & x_{2,3} & x_{2,4} & x_{2,5} & 0 & x_{2,7} & x_{2,8} & 0\\
			0 & 0 & 0       & 0       & 0       & 1 & x_{6,7} & x_{6,8} & 0\\
			0 & 1 & 0       & x_{1,3} & x_{1,4} & x_{1,5} & 0 & x_{1,7} & x_{1,8}\\
			0 & 0 & 1       & x_{2,3} & x_{2,4} & x_{2,5} & 0 & x_{2,7} & x_{2,8}\\
			0 & 0 & 0       & 0       & 0       & 0       & 1 & x_{6,7} & x_{6,8}\\
			0 & 0 & 0       & 1       & 0       & 0       & 0 & y_{4,8} & y_{4,9}\\
			0 & 0 & 0       & 0       & 1       & 0       & 0 & y_{5,8} & y_{5,9}\\
		\end{array}\right).$$
	\end{example}
	
	\section{The Fibonacci trees}
	We still fix $m\in \mathbb{Z}_{\ge 3}$ and $P=(i_1,...,i_{2n})\in\mathcal{I}_n$. 
	In Section~\ref{N2PN1P}, we have defined $N_2(P)$ and $N_1(P)$. Later we will substitute a certain complex number for each $x_{a,b}$ and $y_{a,b}$ in $N_2(P)$ and $N_1(P)$ to get a subrepresentation of $M(m)$. This substitution should be made in such a way that it gives an $e_2$-dimensional  subspace of $\mathbb{C}^{m-3}$ and an $e_1$-dimensional  subspace of $\mathbb{C}^{m-2}$ in $M(m)$. Note that the rows of $N_2(P)$ are in Echelon form, so its row space is of dimension $e_2$ as desired. On the other hand, the number of rows of $N_1(P)$ can be greater than $e_1$ in general. This means that there should be nontrivial relations between the complex numbers we substitute. In this section, we will find these relations. More precisely, we will give an explicit description for $\mathcal{J}(P)$, where $\mathcal{J}(P)$ is the ideal of $R$ generated 
	by the $(e_1(P)+1)\times (e_1(P)+1)$ minors of $N_1(P)$. 
	
	


	For each $\beta\in[0,2n-1]$ and for each pair $(\nu,\mu) \in [1, n]^2$ with $\nu\le \mu$, define $F_1^{(\nu,\mu)}(\beta)$ 
	to be the collection of elements $(m_0,m_1,\dots,m_{\beta})\in [\nu,\mu]^{\beta+1}$ such that
	$$\left\{\begin{array}{ll}
		\nu \leq m_{\beta} < m_{\beta-1} \leq \cdots \leq m_2 < m_1 \leq m_0 =\mu & \text{ if }\beta\text{ is even};\\
		\nu \leq m_{\beta} \leq m_{\beta-1} < \cdots \leq m_2 < m_1 \leq m_0 =\mu &  \text{ if }\beta\text{ is odd,} \end{array}\right.$$ 
	and define $F_2^{(\nu,\mu)}(\beta-1)$ to be the collection of elements 
	$(m_0,m_1,...,m_{\beta-1})\in [\nu,\mu]^{\beta}$ such that
	$$\left\{\begin{array}{ll}
		\nu \leq m_{\beta-1} < \cdots < m_2 \leq m_1 < m_0 =\mu & \text{ if }\beta\text{ is even};\\
		\nu \leq m_{\beta-1} \leq \cdots < m_2 \leq m_1 < m_0 =\mu &  \text{ if }\beta\text{ is odd.} \end{array}\right.$$ 
	
	\begin{definition}
		Let $$F_\eta^{(\nu,\mu)}=\bigcup\limits_{\beta\in[0,2n-1]} F_\eta^{(\nu,\mu)}(\beta)$$ 
		where $1 \leq \eta \leq 2$.
		If $(m_0,m_1,\dots,m_{\beta})\in F_\eta^{(\nu,\mu)}$, then we say that   $(m_0,m_1,\dots,m_{\beta})$ is a \emph{child} of $(m_0,m_1,\dots,m_{\beta-1})$.
		We define the graph $F_\eta^{(\nu,\mu)}(P)$ as the rooted tree such that the vertices are elements of
		$F_\eta^{(\nu,\mu)}$ and there will be an edge between vertices if and only if one is a child of the other. 
		This tree is called a \emph{Fibonacci tree}.
	\end{definition}
	
	\begin{example}\label{example20240110}
		Assume $n=3$. Then
		$$\aligned
		F_1^{(1,3)}=&\{(3),(3,3),(3,2),(3,1),(3,3,2),(3,3,1),(3,2,1), \\
		& (3,3,2,2),(3,3,2,1), (3,3,1,1),(3,2,1,1),(3,3,2,2,1),(3,3,2,2,1,1)\}.
		\endaligned$$
		and
		$$\aligned
		F_2^{(1,3)}=&\{(3),(3,2),(3,1),(3,2,2),(3,2,1), (3,1,1)\\
		& (3,2,2,1),(3,2,2,1,1)\}.
		\endaligned$$
		The associated trees $F_1^{(1,3)}(P)$ and $F_2^{(1,3)}(P)$ respectively, are
		\begin{center}
			\begin{tikzpicture}[scale=0.4mm] 
				\node at (0,0.5) {$(3)$};
				\draw (-.5,0) -- (-1.5,-0.5);
				\draw (0,0) -- (0,-0.5);
				\draw (0.5,0) -- (1.5,-0.5);
				
				\node at (-1.5,-1) {$(3,1)$};
				
				\node at (0,-1) {$(3,2)$};
				\draw (0,-1.5) -- (0,-2);
				\node at (0,-2.5) {\tiny{$(3,2,1)$}};
				\draw (0,-3) -- (0,-3.5);
				\node at (0,-4) {\tiny{$(3,2,1,1)$}};
				
				\node at (1.5,-1) {$(3,3)$};
				\draw (1.5,-1.5) -- (1.5,-2);
				\draw (2,-1.5) -- (3,-2);
				\node at (1.5,-2.5) {\tiny{$(3,3,1)$}};
				\draw (1.5,-3) -- (1.5,-3.5);
				\node at (1.5,-4) {\tiny{$(3,3,1,1)$}};
				
				\node at (3,-2.5) {\tiny{$(3,3,2)$}};
				\draw (3,-3) -- (3,-3.5);
				\draw (3.5,-3) -- (4.5,-3.5);
				\node at (3,-4) {\tiny{$(3,3,2,1)$}};
				\node at (4.5,-4) {\tiny{$(3,3,2,2)$}};
				\draw (4.5,-4.5) -- (4.5,-5);
				\node at (4.5,-5.5) {\tiny{$(3,3,2,2,1)$}};
				\draw (4.5,-6) -- (4.5,-6.5);
				\node at (4.5,-7) {\tiny{$(3,3,2,2,1,1)$}};
			\end{tikzpicture}
			\begin{tikzpicture}[scale=0.4mm] 
				\node at (0,0.5) {$(3)$};
				\draw (0,0) -- (0,-0.5);
				\draw (0.5,0) -- (1.5,-0.5);
				
				\node at (0,-1) {$(3,1)$};
				\draw (0,-1.5) -- (0,-2);
				\node at (0,-2.5) {\tiny{$(3,1,1)$}};
				
				\node at (1.5,-1) {$(3,2)$};
				\draw (1.5,-1.5) -- (1.5,-2);
				\draw (2,-1.5) -- (3,-2);
				\node at (1.5,-2.5) {\tiny{$(3,2,1)$}};

				\node at (3,-2.5) {\tiny{$(3,2,2)$}};
				\draw (3,-3) -- (3,-3.5);
				\node at (3,-4) {\tiny{$(3,2,2,1)$}};
				\draw (3,-4.5) -- (3,-5);
				\node at (3,-5.5) {\tiny{$(3,2,2,1,1)$}};
			\end{tikzpicture}
		\end{center}
		
		The name 
		\emph{Fibonacci tree} is coined by the following:
		
		\begin{lemma}
			The number of vertices in each $F_\eta^{(\nu,\mu)}(P)$ is equal to a Fibonacci number.
		\end{lemma}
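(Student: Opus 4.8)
The plan is to show that the vertex counts depend only on the width $d=\mu-\nu$ and satisfy a coupled linear recursion that merges into the Fibonacci recurrence. First I would observe that translating every coordinate by a constant $c$ gives a tree isomorphism $F_\eta^{(\nu,\mu)}(P)\cong F_\eta^{(\nu+c,\mu+c)}(P)$ preserving the child relation and the bound $\beta\le 2n-1$, so the number of vertices, which I denote $a_\eta(d)$, depends only on $\eta$ and on $d=\mu-\nu$. I would also record that the length restriction $\beta\le 2n-1$ never actually truncates a tree: an $F_1$-chain has a strict descent at every even step, so its length is at most $2d+1$, and since $\nu,\mu\in[1,n]$ we have $d\le n-1$ and hence $2d+1\le 2n-1$; likewise an $F_2$-chain has length at most $2d\le 2n-2$. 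Thus each $F_\eta^{(\nu,\mu)}(P)$ is exactly the tree of all admissible alternating chains inside an interval of $d+1$ integers, and nothing is cut off.

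The crux is a recursive description of the subtrees hanging off the root. In $F_1^{(\nu,\mu)}(P)$ the relation between $m_{k-1}$ and $m_k$ is weak ($\le$) when $k$ is odd and strict ($<$) when $k$ is even, while in $F_2^{(\nu,\mu)}(P)$ the pattern is exactly interchanged. The root $(\mu)$ of $F_1^{(\nu,\mu)}(P)$ has the $d+1$ children $(\mu,m_1)$ with $m_1\in[\nu,\mu]$, and deleting $m_0$ shifts every surviving index down by one, interchanging odd and even positions; hence the subtree rooted at $(\mu,m_1)$ is isomorphic to the full Fibonacci tree $F_2^{(\nu,m_1)}(P)$. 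Symmetrically, the root of $F_2^{(\nu,\mu)}(P)$ has only the $d$ children $(\mu,m_1)$ with $m_1\in[\nu,\mu-1]$ (the first step is now strict), and the subtree at $(\mu,m_1)$ is isomorphic to $F_1^{(\nu,m_1)}(P)$. Verifying this index shift carefully — that the alternating pattern of $<$ and $\le$ is genuinely interchanged and that the (inactive) length bounds line up across the shift — is the one place that needs attention, and it is the main obstacle.

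Summing over children then yields the coupled recursions $a_1(d)=1+\sum_{j=0}^{d} a_2(j)$ and $a_2(d)=1+\sum_{j=0}^{d-1} a_1(j)$, with base values $a_1(0)=2$ and $a_2(0)=1$. Taking first differences removes both the constant and the summation, giving $a_1(d)=a_1(d-1)+a_2(d)$ and $a_2(d)=a_2(d-1)+a_1(d-1)$. Finally I would interleave the two sequences by setting $c_{2d+2}=a_2(d)$ and $c_{2d+3}=a_1(d)$; under this relabeling the two difference relations become the single recurrence $c_k=c_{k-1}+c_{k-2}$, and the initial values $c_2=a_2(0)=1$ and $c_3=a_1(0)=2$ identify $c_k$ with the Fibonacci number $f_k$ (normalized by $f_1=f_2=1$). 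Hence $\lvert F_2^{(\nu,\mu)}(P)\rvert=f_{2d+2}$ and $\lvert F_1^{(\nu,\mu)}(P)\rvert=f_{2d+3}$, both Fibonacci numbers, which is exactly the assertion of the lemma.
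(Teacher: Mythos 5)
Your proof is correct. There is nothing in the paper to compare it against: the paper's entire proof of this lemma is the single word ``Straightforward,'' so your argument supplies precisely the details the authors omit. The two points you flag as needing care do check out. First, the length cap $\beta\le 2n-1$ is indeed inactive: an $F_1$-chain has a strict drop at every even index, so its length is at most $2d+1$ where $d=\mu-\nu\le n-1$, and an $F_2$-chain (strict drops at odd indices) has length at most $2d$; hence each tree is the full tree of admissible alternating chains and its size depends only on $d$. Second, deleting the root entry shifts all indices down by one, which swaps the parities at which $<$ and $\le$ occur, so the subtree of $F_1^{(\nu,\mu)}(P)$ below a child $(\mu,m_1)$ is a copy of $F_2^{(\nu,m_1)}(P)$, and the subtree of $F_2^{(\nu,\mu)}(P)$ below $(\mu,m_1)$ (now with only the $d$ choices $m_1<\mu$) is a copy of $F_1^{(\nu,m_1)}(P)$. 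Your coupled recursions, base values $a_1(0)=2$, $a_2(0)=1$, and the interleaving then give the closed forms
\[
\bigl|F_2^{(\nu,\mu)}(P)\bigr|=f_{2d+2},\qquad \bigl|F_1^{(\nu,\mu)}(P)\bigr|=f_{2d+3},
\]
which agree with the paper's Example (for $d=2$: $|F_1^{(1,3)}|=13=f_7$ and $|F_2^{(1,3)}|=8=f_6$). Note that your argument proves a sharper statement than the lemma asks for, namely the exact Fibonacci indices, and as a byproduct it also re-proves the paper's subsequent observation that the vertex count depends only on $\mu-\nu$.
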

		\begin{proof}
			Straightforward.
		\end{proof}
		
		Note that, in Example~\ref{example20240110},  the tree $F_\eta^{(3,3)}(P)$ is a subtree of the tree $F_\eta^{(2,3)}(P)$, which is a subtree of $F_\eta^{(1,3)}(P)$.
	\end{example}
	
	The following lemma is clear.
	\begin{lemma}
		The trees $F_\eta^{(\nu,\mu)}(P)$ and $F_\eta^{(\nu',\mu')}(P)$ are isomorphic to each other 
		if and only if $\mu-\nu=\mu'-\nu'$.
	\end{lemma}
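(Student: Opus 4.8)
The plan is to treat the two implications separately: the forward direction ($\mu-\nu=\mu'-\nu'$ implies isomorphic) by an explicit translation isomorphism, and the converse by a strict monotonicity argument on the number of vertices. Observe first that the defining conditions for a sequence $(m_0,m_1,\dots,m_\beta)$ to lie in $F_\eta^{(\nu,\mu)}(\beta)$ are of three kinds: the normalization $m_0=\mu$; the lower bound $m_\beta\ge\nu$; and a fixed alternating pattern of strict and weak inequalities between consecutive entries, whose type at step $j$ depends only on the parity of $j$ and on $\eta$ (strict at even $j$ for $\eta=1$, at odd $j$ for $\eta=2$), and not on $\nu$ or $\mu$. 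This decomposition is what makes the translation work.

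For the forward direction, suppose $\mu-\nu=\mu'-\nu'$ and set $c=\nu'-\nu=\mu'-\mu$. I would define $\Phi$ by $\Phi(m_0,\dots,m_\beta)=(m_0+c,\dots,m_\beta+c)$. Adding the constant $c$ preserves every inequality of the alternating pattern, converts $m_0=\mu$ into $m_0+c=\mu'$, and converts $m_\beta\ge\nu$ into $m_\beta+c\ge\nu'$; hence $\Phi$ sends $F_\eta^{(\nu,\mu)}(\beta)$ into $F_\eta^{(\nu',\mu')}(\beta)$ for each $\beta\in[0,2n-1]$, with inverse obtained by subtracting $c$. Since $\Phi$ preserves the length $\beta$ it is a bijection of the vertex sets, and since it acts entrywise it commutes with truncation, so it carries the child relation to the child relation and is therefore a tree isomorphism.

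For the converse I would count vertices. Writing $V_\eta(d)$ for the number of vertices of $F_\eta^{(\nu,\mu)}(P)$ when $d=\mu-\nu$, the translation isomorphism just constructed shows this count depends only on $d$, and $d$ ranges over $[0,n-1]$ for $\nu,\mu\in[1,n]$. It suffices to prove $V_\eta$ is strictly increasing on $[0,n-1]$. Fixing $\mu=n$ and comparing $\nu=n-d$ with $\nu'=n-d-1$ (both at least $1$ when $d\in[0,n-2]$), every admissible sequence for $(\nu,n)$ is admissible for $(\nu',n)$, since the two share the same $m_0=n$ and the same inequality pattern and differ only by relaxing the lower bound from $\nu$ to $\nu'=\nu-1$. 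As admissibility is preserved under truncation, this inclusion realizes $F_\eta^{(\nu,n)}(P)$ as a root-containing subtree of $F_\eta^{(\nu',n)}(P)$, and the inclusion is proper because the length-two sequence $(n,\nu-1)$ is admissible for $(\nu',n)$ but not for $(\nu,n)$ (here $\nu-1<n$ guarantees the step-one inequality, strict or weak, is satisfied). Hence $V_\eta(d)<V_\eta(d+1)$, so isomorphic trees, having equal vertex counts, must have $d=d'$, i.e.\ $\mu-\nu=\mu'-\nu'$.

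The translation map makes the forward implication essentially automatic, so the point requiring genuine care is the converse: one must rule out accidental coincidences among the (Fibonacci) vertex counts for distinct values of $d$. I expect the proper-subtree embedding to be the crux, as it is precisely what upgrades the equality of vertex counts into strict monotonicity and hence into non-isomorphism for $d\neq d'$.
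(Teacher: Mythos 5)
Your proof is correct, but there is nothing in the paper to compare it against: the authors introduce this lemma with ``The following lemma is clear'' and give no proof at all. Your forward direction --- the entrywise translation $\Phi(m_0,\dots,m_\beta)=(m_0+c,\dots,m_\beta+c)$ with $c=\nu'-\nu=\mu'-\mu$ --- is surely the reasoning the authors considered obvious, and your preliminary observation that the strict/weak pattern of inequalities depends only on the position parity and on $\eta$, never on $(\nu,\mu)$, is precisely the fact that makes the translation, and also truncation-stability, work. For the converse, the paper's intended route is presumably via the immediately preceding lemma (the vertex count of $F_\eta^{(\nu,\mu)}(P)$ is a Fibonacci number depending only on $\mu-\nu$) together with monotonicity of those counts; you instead prove strict monotonicity of the counts directly, by exhibiting $F_\eta^{(n-d,n)}(P)$ as a root-containing subtree of $F_\eta^{(n-d-1,n)}(P)$ and using the extra vertex $(n,\nu-1)$ to show the inclusion is proper. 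This is self-contained and slightly more robust than the Fibonacci route: it never requires identifying the counts (and so sidesteps any worry about coincidences such as $F_1=F_2=1$, which in fact only occur across different values of $\eta$). Your checks at the delicate points --- that $\nu'=\nu-1\ge 1$ forces $d\le n-2$, which still suffices for strict increase on all of $[0,n-1]$, and that $(n,\nu-1)$ satisfies the step-one inequality whether it is strict ($\eta=2$) or weak ($\eta=1$) --- are accurate, so the argument is complete as written.
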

	
	\begin{definition}
		We define another tree $\widetilde{F}_\eta^{(\nu,\mu)}(P)$ to be obtained from $F_\eta^{(\nu,\mu)}(P)$ by adding to each vertex $(m_0,\dots,m_\beta)$ its child  $\widetilde{(m_0,\dots,m_\beta)}$. This tree will be called a \emph{framed Fibonacci tree}. Let $\widetilde{V}_\eta^{(\nu,\mu)}=\widetilde{V}_\eta^{(\nu,\mu)}(P)$ be the set of vertices of $\widetilde{F}_\eta^{(\nu,\mu)}(P)$.
	\end{definition}
	
	\begin{example}
		Consider ${F}_2^{(1,3)}(P)$ as in the previous example.
		Then $\widetilde{F}_2^{(1,3)}(P)$ is given below.
		
		$$\begin{tikzpicture}[scale=0.4mm] 
			\node at (0,0.5) {$(3)$};
			\draw (-0.5,0) -- (-1.5,-0.5);
			\draw (0,0) -- (0,-0.5);
			\draw (0.5,0) -- (3,-0.5);
			
			\node at (-1.5,-1) {$\widetilde{(3)}$};
			
			\node at (0,-1) {$(3,1)$};
			\draw (-0.5,-1.5) -- (-1.5,-2);
			\draw (0,-1.5) -- (0,-2);
			\node at (-1.5,-2.5) {$\widetilde{(3,1)}$};
			\node at (0,-2.5) {$(3,1,1)$};
			\draw (-0.5,-3) -- (-1.5,-3.5);
			\node at (-1.5,-4) {$\widetilde{(3,1,1)}$};
			
			\node at (3,-1) {$(3,2)$};
			\draw (2.5,-1.5) -- (1.5,-2);
			\draw (3,-1.5) -- (3,-2);
			\draw (3.5,-1.5) -- (4.5,-2);
			\node at (1.5,-2.5) {$\widetilde{(3,2)}$};
			\node at (3,-2.5) {$(3,2,1)$};
			\draw (2.5,-3) -- (1.5,-3.5);
			\node at (1.5,-4) {$\widetilde{(3,2,1)}$};
			
			\node at (4.5,-2.5) {$(3,2,2)$};
			\draw (4,-3) -- (3,-3.5);
			\draw (4.5,-3) -- (4.5,-3.5);
			\node at (3,-4) {$\widetilde{(3,2,2)}$};
			
			\node at (4.5,-4) {$(3,2,2,1)$};
			\draw (4,-4.5) -- (3,-5);
			\draw (4.5,-4.5) -- (4.5,-5);
			
			\node at (2.5,-5.5) {$\widetilde{(3,2,2,1)}$};
			
			\node at (4.5,-5.5) {$(3,2,2,1,1)$};
			\draw (4,-6) -- (3,-6.5);
			\node at (2.5,-7) {$\widetilde{(3,2,2,1,1)}$};
		\end{tikzpicture}$$
	\end{example}
	
	\begin{definition}
		Consider $P=(i_1,\dots,i_{2n})\in \mathcal I$, and 
		fix a pair $(\nu,\mu)$ of integers with $1 \leq \nu \leq \mu \leq n$, 
		and another pair $(j,k)$ of integers such that $i_{2\nu-1}+1 \leq j \leq i_{2\nu}$ 
		and $i_{2\mu}+1 \leq k \leq i_{2\mu+1}$.
		
		The \textit{set of leading terms} for $(j,k)$, denoted $\mathcal{L}^P(j,k)$, is defined as follows. 
		$$\mathcal{L}^P(j,k)=
		\begin{cases}
			\{x(i_{2n-1}+1,i_{2n+1})\}, &\text{if $j=i_{2n-1}+1$, $k=i_{2n+1}$ and $i_{2n}+1 < i_{2n+1}$};\\
			\{x(j,i_{2n+1})\}, &\text{if $k=i_{2n+1}$ and $i_{2n}+1 = i_{2n+1}$};\\
			\{x(j,k),x(j+1,k+1)\}, &\text{if $i_{2\nu-1}+1 \leq j < i_{2\nu}$ and $i_{2\mu}+1 \leq k < i_{2\mu+1}$};\\
			\emptyset, &\text{otherwise}.
		\end{cases}
		$$
		
		Note that the condition $i_{2n}+1 < i_{2n+1}$ is saying that the ``last group of variables" consists of more than one column.
		\begin{example}
			Consider $P=(0,2,3,5)$ where $m-3=7$. Then
			$$N_2(P)=
			\left(\begin{array}{ccccccccc}
				1 & 0 & x_{1,3} & 0 & 0 & x_{1,6} & x_{1,7}\\
				0 & 1 & x_{2,3} & 0 & 0 & x_{2,6} & x_{2,7}\\
				0 & 0 & 0       & 1 & 0 & x_{4,6} & x_{4,7}\\
				0 & 0 & 0       & 0 & 1 & x_{5,6} & x_{5,7}\\
			\end{array}\right)$$ and the possible nonempty sets of leading terms are
			$\mathcal{L}^P(1,7)=\{x_{1,7}\}$, $\mathcal{L}^P(4,7)=\{x_{4,7}\}$, $\mathcal{L}^P(1,6)=\{x_{1,6},x_{2,7}\}$
			and $\mathcal{L}^P(4,6)=\{x_{4,6},x_{5,7}\}$.
			
			If $m-3=6$ and $P=(0,2,3,5)$, then we have
			$$N_2(P)=
			\left(\begin{array}{ccccccccc}
				1 & 0 & x_{1,3} & 0 & 0 & x_{1,6}\\
				0 & 1 & x_{2,3} & 0 & 0 & x_{2,6}\\
				0 & 0 & 0       & 1 & 0 & x_{4,6}\\
				0 & 0 & 0       & 0 & 1 & x_{5,6}\\
			\end{array}\right)$$ and the possible nonempty sets of leading terms are
			$\mathcal{L}^P(1,6)=\{x_{1,6}\}$, $\mathcal{L}^P(2,6)=\{x_{2,6}\}$, $\mathcal{L}^P(4,6)=\{x_{4,6}\}$
			and $\mathcal{L}^P(5,6)=\{x_{5,6}\}$.
		\end{example}
		
		Note that  $|\mathcal{L}^P(j,k)|\in\{0,1,2\}$. Assume that $|\mathcal{L}^P(j,k)|\ge 1$. 
		Then, for each $\eta\in[1,|\mathcal{L}^P(j,k)|]$, we define a function $\varphi_\eta^{(j,k)} : \widetilde{V}^{(\nu,\mu)}\longrightarrow R$ as follows. 
		$$\aligned 
		&\varphi_\eta^{(j,k)}(m_0)=1\\
		&\varphi_\eta^{(j,k)}(\widetilde{(m_0)})=\left\{\begin{array}{ll}
			x(j,k), &\text{ if }\eta=1;\\
			x(j+1,k+1), &\text{ if }\eta=2;\\
		\end{array}\right.\\
		&\varphi_\eta^{(j,k)}(m_0,m_1)=\left\{\begin{array}{ll}
			x(i_{2m_1},k), &\text{ if }\eta=1;\\
			x(i_{2m_1+1}+1,k+1), &\text{ if }\eta=2;\\
		\end{array}\right.\\
		&\varphi_\eta^{(j,k)}(\widetilde{(m_0,m_1)})=\left\{\begin{array}{ll}
			x(j+1,i_{2m_1}+1), &\text{ if }\eta=1;\\
			x(j,i_{2m_1+1}), &\text{ if }\eta=2.\\
		\end{array}\right.\\
		\endaligned$$
		If $2 \leq \alpha \leq \mu-\nu$ is an even integer, then
		$$\aligned 
		&\varphi_\eta^{(j,k)}(m_0,\dots,m_\alpha)=\left\{\begin{array}{ll}
			x(i_{2m_\alpha+1}+1,i_{2m_{\alpha-1}}+1), &\text{ if }\eta=1;\\
			x(i_{2m_\alpha},i_{2m_{\alpha-1}+1}), &\text{ if }\eta=2;\\
		\end{array}\right.\\
		&\varphi_\eta^{(j,k)}(\widetilde{(m_0,\dots,m_\alpha)})=\left\{\begin{array}{ll}
			x(j,i_{2m_\alpha+1}), &\text{ if }\eta=1;\\
			x(j+1,i_{2m_\alpha}+1), &\text{ if }\eta=2.\\
		\end{array}\right.\\
		\endaligned$$
		If $3 \leq \alpha \leq \mu-\nu$ is an odd integer, then
		$$\aligned &\varphi_\eta^{(j,k)}(m_0,\dots,m_{\alpha})=\left\{\begin{array}{ll}
			x(i_{2m_{\alpha}},i_{2m_{\alpha-1}+1}), &\text{ if }\eta=1;\\
			x(i_{2m_{\alpha}}+1,i_{2m_{\alpha-1}}), &\text{ if }\eta=2;\\
		\end{array}\right.\\
		&\varphi_\eta^{(j,k)}(\widetilde{(m_0,\dots,m_\alpha)})=\left\{\begin{array}{ll}
			x(j+1,i_{2m_{\alpha}}+1), &\text{ if }\eta=1;\\
			x(j,i_{2m_{\alpha}+1}), &\text{ if }\eta=2.\\
		\end{array}\right.\\
		\endaligned$$
		We will say that $x\in R$ is a \textit{child} of $y\in R$ if there exist
		$\mathbf{m}_1, \mathbf{m}_2 \in \widetilde{V}^{(\nu,\mu)}$ such that
		$\varphi_\eta^{(j,k)}(\mathbf{m}_1)=x$, $\varphi_\eta^{(j,k)}(\mathbf{m}_2)=y$ and $\mathbf{m}_1$
		is a child of $\mathbf{m}_2$.
		When convenient, we will identify the vertices of $\widetilde{F}_\eta^{(\nu,\mu)}(P)$ with
		their images via $\varphi_\eta^{(j,k)}$.
	\end{definition}

	\begin{example}
		Consider $m-3=12$ and $P=(0,2,4,6,8,10)$. One of the leading terms is $x_{2,12} \in \mathcal{L}^P(1,11)$.

		The tree $\widetilde{F}_2^{(1,3)}(P)$ may be written as
		$${\tiny\begin{tikzpicture}[scale=0.3mm] 
				\node at (0,0.5) {$1$};
				\draw (-0.5,0) -- (-1.5,-0.5);
				\draw (0,0) -- (0,-0.5);
				\draw (0.5,0) -- (3,-0.5);
				
				\node at (-1.5,-1) {$x_{2,12}$};
				
				\node at (0,-1) {$x_{5,12}$};
				\draw (-0.5,-1.5) -- (-1.5,-2) ;
				\draw (0,-1.5) -- (0,-2);
				\node at (-1.5,-2.5) {$x_{1,4}$};
				\node at (0,-2.5) {$x_{2,4}$};
				\draw (-0.5,-3) -- (-1.5,-3.5);
				\node at (-1.5,-4) {$x_{2,3}$};
				
				\node at (3,-1) {$x_{9,12}$};
				\draw (2.5,-1.5) -- (1.5,-2);
				\draw (3,-1.5) -- (3,-2);
				\draw (3.5,-1.5) -- (4.5,-2);
				\node at (1.5,-2.5) {$x_{1,8}$};
				\node at (3,-2.5) {$x_{2,8}$};
				\draw (2.5,-3) -- (1.5,-3.5);
				\node at (1.5,-4) {$x_{2,3}$};
				
				\node at (4.5,-2.5) {$x_{6,8}$};
				\draw (4,-3) -- (3,-3.5);
				\draw (4.5,-3) -- (4.5,-3.5);
				\node at (3,-4) {$x_{2,7}$};
				
				\node at (4.5,-4) {$x_{5,7}$};
				\draw (4,-4.5) -- (3,-5);
				\draw (4.5,-4.5) -- (4.5,-5);
				
				\node at (3,-5.5) {$x_{1,4}$};
				
				\node at (4.5,-5.5) {$x_{2,4}$};
				\draw (4,-6) -- (3,-6.5);
				\node at (3,-7) {$x_{2,3}$};
		\end{tikzpicture}}$$
	\end{example}
	
	\begin{definition}\label{lset}
		Define 
		$$\aligned 
		&D^P_{j,k} =
		\sum_{\eta=1}^{|\mathcal L(j,k)|} (-1)^{\eta-1}\sum_{(m_0,\dots,m_\beta)\in F_\eta^{(\nu,\mu)}} \varphi_\eta^{(j,k)}(\widetilde{(m_0,\dots,m_\beta)})\prod_{\ell\in[0,\beta]} \varphi_\eta^{(j,k)}(m_0,\dots,m_\ell).
		\endaligned$$
		and define
		$$\widehat{D^P_{j,k}}=\left\{\begin{array}{ll}
			D^P_{j,k}, & \text{ if } \mathcal{A}(P)=\emptyset;\\ 
			&\\
			D^{P'}_{j,k}-\sum\limits_{i_{2\beta} \in \mathcal{A}(P)} y(i_{2\beta}+1,k+1)D^{P'}_{j,i_{2\beta}}, & \text{ if } \mathcal{A}(P)\neq\emptyset,\end{array}\right.$$
		where $D^{P'}_{j,i_{2\beta_\ell}}$ is defined as 
		$D^{P'}_{j,i_{2\beta}}$ with $P' \in \mathcal{I}_{n-|\mathcal{A}(P)|}$ obtained from $P$ by removing the entries $(i_{2\beta-1},i_{2\beta})$ for every $i_{2\beta} \in \mathcal{A}(P)$. 
		
		Note that $\widehat{D^P_{j,k}}=D^P_{j,k}$ if
		\begin{enumerate}
			\item $\mathcal{A}(P)=\emptyset$ or
			\item $\mathcal{L}^{P'}(j,i_{2\beta})=\emptyset$ for every $i_{2\beta} \in \mathcal{A}(P)$. 
		\end{enumerate}
		
		We also define $L^P_{j,k}$ as the linear part of $\widehat{D^P_{j,k}}$, that is,
		$$L^P_{j,k}=\left\{
		\begin{array}{ll}
			0, & \text{ if }|\mathcal{L}^P(j,k)|=0; \\
			x(j,k),   & \text{ if }|\mathcal{L}^P(j,k)|=1; \\
			x(j,k)- x(j+1,k+1),  & \text{ if }|\mathcal{L}^P(j,k)|=2.
		\end{array} \right.
		$$
	\end{definition}
	
	\begin{remark}\label{rem}
		It is easy to see that $\widehat{D^P_{j,k}}$ and $L^P_{j,k}$ have the following features.
		
		\noindent(1) For fixed $j$ and $k$, the variables $x_{j,k}$ 
		and/or $x_{j+1,k+1}$ appear only in the linear part of the equation $\widehat{D^P_{j,k}}$. 
		
		\noindent(2) Let
		$$JK:=\bigcup\limits_{1\le\nu\le\mu\le n}\{(j,k)\, : \, i_{2\nu-1}+1 \leq j \leq i_{2\nu},\ i_{2\mu}+1 \leq k \leq i_{2\mu+1}, \mathcal{L}^P(j,k)\neq\emptyset\text{ and } k \notin \mathcal{A}(P)
		\}.$$
		Then the set
		$D(P):=\{L^P_{j,k}\, : (j,k)\in JK\}$ is linearly independent.
		
		\noindent(3) If we define a relation on the set $JK$ such that $(j,k) \leq (j',k')$ if and only if 
		$x(j,k)$ appears in a nonlinear term of $\widehat{D^P_{j',k'}}$, then this relation becomes a partial order.
	\end{remark}
	
	\begin{example}
		Consider $P=(0,2,4,5)$ where $m-3=8$. Then
		$$N_2(P)=
		\left(\begin{array}{cccccccccc}
			1 & 0 & x_{1,3} & x_{1,4} & 0 & 0 & x_{1,7} & x_{1,8}\\
			0 & 1 & x_{2,3} & x_{2,4} & 0 & 0 & x_{2,7} & x_{2,8}\\
			0 & 0 & 0       & 0       & 1 & 0 & x_{5,7} & x_{5,8}\\
			0 & 0 & 0       & 0       & 0 & 1 & x_{6,7} & x_{6,8}\\
		\end{array}\right).$$
		Recall $\mathcal{L}^P(1,7)=\{x_{1,7},x_{2,8}\}$.
		In this case,
		$\widetilde{F}^{(1,7)}_1 \cup \widetilde{F}^{(1,7)}_2$ is \\
		$${\tiny\begin{tikzpicture}[scale=0.3mm] 
				\node at (0,0.5) {$1$};
				\draw (-0.5,0) -- (-1.5,-0.5);
				\draw (0,0) -- (0,-0.5);
				\draw (0.5,0) -- (1.5,-0.5);
				
				\node at (-1.5,-1) {$x_{1,7}$};
				
				\node at (0,-1) {$x_{2,7}$};
				\draw (-0.5,-1.5) -- (-1.5,-2);
				\node at (-1.5,-2.5) {$x_{2,3}$};
				
				\node at (1.5,-1) {$x_{6,7}$};
				\draw (1,-1.5) -- (0,-2);
				\node at (0,-2.5) {$x_{2,7}$};
				\draw (1.5,-1.5) -- (1.5,-2);
				
				\node at (1.5,-2.5) {$x_{5,7}$};    
				\draw (1,-3) -- (0,-3.5);
				\node at (0,-4) {$x_{1,5}$};
				\draw (1.5,-3) -- (1.5,-3.5);
				\node at (1.5,-4) {$x_{2,4}$};
				\draw (1,-4.5) -- (0,-5);
				\node at (0,-5.5) {$x_{1,5}$};
		\end{tikzpicture}} \hspace{50pt}
		{\tiny\begin{tikzpicture}[scale=0.3mm] 
				\node at (0,0.5) {$1$};
				\draw (-0.5,0) -- (-1.5,-0.5);
				\draw (0,0) -- (0,-0.5);
				
				\node at (-1.5,-1) {$x_{2,8}$};
				
				\node at (0,-1) {$x_{5,8}$};
				\draw (-0.5,-1.5) -- (-1.5,-2) ;
				\draw (0,-1.5) -- (0,-2);
				\node at (-1.5,-2.5) {$x_{1,4}$};
				\node at (0,-2.5) {$x_{2,4}$};
				\draw (-0.5,-3) -- (-1.5,-3.5);
				\node at (-1.5,-4) {$x_{2,3}$}; 
		\end{tikzpicture}}$$
		To compute $\widehat{D^P_{1,7}},$ each term will be the product of vertices of the unique path starting at a leaf of the forest $\widetilde{F}^{(1,7)}_1 \cup \widetilde{F}^{(1,7)}_2$ to the vertex 1 contained in the same path component.
		If the path is completely contained in $\widetilde{F}^{(1,7)}_1$, 
		the coefficient of the corresponding term will be $+1$.
		If the path is completely contained in $\widetilde{F}^{(1,7)}_2$, 
		the coefficient of the corresponding term will be $-1$.
		Then,
		\begin{align*}
			\widehat{D^P_{1,7}}=D^P_{1,7}=&(x_{1,7}+x_{2,7}x_{2,3}+x_{6,7}x_{2,7}+x_{6,7}x_{5,7}x_{1,4}+x_{6,7}x_{5,7}x_{2,4}x_{2,3})\\
			&-(x_{2,8}+x_{5,8}x_{1,4}+x_{5,8}x_{2,4}x_{2,3}).
		\end{align*}
		Furthermore,
		\begin{enumerate}
			\item $JK=\{(1,3),(1,7),(1,8),(5,7)\}$ and 
			\item $D(P)=\{x_{1,3}-x_{2,4},\ x_{1,7}-x_{2,8},\ x_{1,8},\ x_{5,7}-x_{6,8}\}$.
		\end{enumerate}
	\end{example}

	\begin{example}
		Consider the $P=(0,2,3,3,4,4,5,6)$ where $m-3=8$.
		Then $N_1(P)$ is given in Example~\ref{example20240124}. We get
		\begin{align*}    
			\widehat{D^P_{1,8}}=&x_{1,8}+x_{6,8}x_{2,7}+x_{2,8}x_{2,3}+x_{6,8}x_{6,7}x_{1,5}+x_{6,8}x_{6,7}x_{2,5}x_{2,3}\\
			&-y_{4,9}(x_{2,3}^2+x_{1,3}-x_{2,4})-y_{5,9}(x_{2,3}x_{2,4}+x_{1,4}-x_{2,5})\\
			&=D^{P}_{1,8}-y_{4,9}D^{P'}_{1,3}-y_{5,9}D^{P'}_{1,4}\text{ and}\\    
			\widehat{D^P_{1,7}}=&x_{1,7}-x_{2,8}-x_{6,8}x_{1,5}+x_{6,7}x_{2,7}+x_{2,7}x_{2,3}-x_{6,8}x_{2,5}x_{2,3}+x_{6,7}^2x_{1,5}+x_{6,7}^2x_{2,5}x_{2,3}\\
			&-y_{4,8}(x_{2,3}^2+x_{1,3}-x_{2,4})-y_{5,8}(x_{2,3}x_{2,4}+x_{1,4}-x_{2,5})\\
			&=D^{P}_{1,7}-y_{4,8}D^{P'}_{1,3}-y_{5,8}D^{P'}_{1,4}
		\end{align*}
		where $P'=(0,2,5,6)$.
	\end{example}

	\section{The main theorem}
	In this section, we present our main theorem, which can be used to explicitly construct every subrepresentation of $M(m)$. 
	
	Let $\mathcal{K}(P)$ be the ideal of $R$  generated by
	$\{\widehat{D^P_{j,k}}: (j,k)\in JK\}$. 
	Recall that $\mathcal{J}(P)$ is the ideal of $R$ generated 
	by the $(e_1(P)+1)\times (e_1(P)+1)$ minors of $N_1(P)$.
	The following is our main theorem.
	
	\begin{theorem}
		We have $\mathcal{J}(P)=\mathcal{K}(P)$. 
	\end{theorem}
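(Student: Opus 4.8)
The plan is to compute the ideal $\mathcal J(P)$ of $(e_1+1)$-minors of $N_1(P)$ by first putting $N_1(P)$ into a normal form under left multiplication by an invertible matrix over $R$, and then reading off the minors directly. The crucial structural observation is that $N_1(P)$ has a distinguished set of $e_1$ rows, all in echelon form with pivot entries equal to $1$: namely the nonzero rows of $\overline{N^{(2)}}(P)$, a suitable subset of the nonzero rows of $\underline{N^{(2)}}(P)$, and the nonzero rows of $S'(P)$. Since $N^{(2)}(P)$ is upper triangular, these $e_1 = n+e_2$ rows can be chosen to have distinct pivot columns. The remaining $e_2 - p$ rows (where $p$ is the number of indices $j$ with $i_{2j}-i_{2j-1}>0$), which arise from the pivot-column collisions between $\overline{N^{(2)}}(P)$ and $\underline{N^{(2)}}(P)$, will be called the \emph{extra} rows, each labeled by some $j$.

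First I would perform Gaussian elimination: using the $e_1$ pivot rows (all pivots equal to $1$), clear every pivot-column entry of each extra row, proceeding from the leftmost pivot column to the rightmost. Because every pivot row is in echelon form, this only ever modifies columns to the right of the column being cleared, so the process terminates with each extra row supported on the non-pivot columns. All operations used are elementary row operations over $R$ (no denominators, since the pivots are units), together with the reduction of the pivot block to reduced row echelon form; hence the entire reduction is left multiplication by some $U\in GL_r(R)$, where $r$ is the number of rows of $N_1(P)$. Consequently $\mathcal J(P) = I_{e_1+1}(N_1(P)) = I_{e_1+1}(U\,N_1(P))$, so it suffices to compute the $(e_1+1)$-minors of the reduced matrix.

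The heart of the argument, and the step I expect to be the main obstacle, is to show that after this elimination the entry of the extra row labeled $j$ in the non-pivot column labeled $k$ is precisely $\pm\widehat{D^P_{j,k}}$ (which vanishes unless $(j,k)\in JK$). I would prove this by induction matching the recursion of the elimination to the rooted-tree structure of $F_\eta^{(\nu,\mu)}(P)$: each successive pivot-clearing step corresponds to passing from a vertex to its children, the product $\prod_{\ell}\varphi_\eta^{(j,k)}(m_0,\dots,m_\ell)$ records the accumulated multipliers along a root-to-leaf path, the framed leaf $\varphi_\eta^{(j,k)}(\widetilde{(m_0,\dots,m_\beta)})$ records the surviving non-pivot entry, and the two subtrees $\eta=1,2$ with opposite signs account for a pivot column shared by an $\overline{\,\cdot\,}$ row and an $\underline{\,\cdot\,}$ row. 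When $\mathcal A(P)\neq\emptyset$, clearing additionally against the rows of $S'(P)$, whose entries carry the variables $y(i_{2\beta}+1,\ell)$, produces exactly the correction terms $-\sum_{i_{2\beta}\in\mathcal A(P)} y(i_{2\beta}+1,k+1)\,D^{P'}_{j,i_{2\beta}}$, yielding $\widehat{D^P_{j,k}}$ rather than $D^P_{j,k}$. I have checked this identification against the example $P=(0,2,3,3,4,4,5,6)$, where the single extra row $\underline 1$ reduces to $(\pm\widehat{D^P_{1,7}},\ \pm\widehat{D^P_{1,8}})$ in its two non-pivot columns.

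Granting this identification, $U\,N_1(P)$ has block shape consisting of the reduced-echelon pivot block (the identity on the pivot columns) together with the extra rows, which are zero on the pivot columns and carry entries $\pm\widehat{D^P_{j,k}}$ on the non-pivot columns. Any $(e_1+1)\times(e_1+1)$ submatrix uses at least one extra row; expanding its determinant by Laplace along the extra rows, every nonvanishing term contains at least one factor $\widehat{D^P_{j,k}}$, so the minor lies in $\mathcal K(P)$, giving $\mathcal J(P)\subseteq\mathcal K(P)$. Conversely, selecting all $e_1$ pivot rows, one extra row labeled $j$, the $e_1$ pivot columns, and one non-pivot column $k$ with $(j,k)\in JK$ yields a minor equal to $\pm\widehat{D^P_{j,k}}$, since the complementary pivot minor is $\pm 1$; as $(j,k)$ ranges over $JK$ these exhaust the generators of $\mathcal K(P)$, giving $\mathcal K(P)\subseteq\mathcal J(P)$. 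Hence $\mathcal J(P)=\mathcal K(P)$. The bookkeeping that the extra rows are indexed by the admissible $j$'s and the non-pivot columns by the admissible $k$'s, with nonzero reduced entries occurring exactly when $(j,k)\in JK$ and $k\notin\mathcal A(P)$, is part of the inductive identification and is where Remark~\ref{rem} is used.
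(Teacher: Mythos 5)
Your overall strategy is sound, and its two halves relate to the paper's proof in different ways. For $\mathcal{K}(P)\subseteq\mathcal{J}(P)$ you and the paper ultimately do the same thing: your minor on (all pivot rows, one extra row $\underline{j}$) times (all pivot columns, one non-pivot column $k+1$) is precisely the paper's submatrix $A^P_{j,k}$, and the statement that this minor equals $\pm\widehat{D^P_{j,k}}$ is exactly the content of the paper's Cases~\ref{det1} and~\ref{det}. For $\mathcal{J}(P)\subseteq\mathcal{K}(P)$ your route is genuinely different and, I think, cleaner: the paper expands an arbitrary $(e_1(P)+1)$-minor using ``the standard method of (quadratic) relations between minors,'' which it does not really justify, whereas you pass to the normal form $UN_1(P)$ with $U\in GL_r(R)$ (legitimate, since all pivots are $1$, so no denominators arise), invoke invariance of determinantal ideals under $GL_r(R)$, and Laplace-expand along the reduced extra rows, whose only nonzero entries are the $\pm\widehat{D^P_{j,k}}$. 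This replaces the Pl\"ucker-type identity by completely standard linear algebra and makes both inclusions fall out of a single normal form.

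The caveat is that the heart of the matter --- the identification of the reduced entry of extra row $\underline{j}$ in column $k+1$ with $\pm\widehat{D^P_{j,k}}$ --- is not actually carried out, and here your reorganization buys nothing: by the standard fact that clearing a row against unit-pivot rows computes bordered minors (the post-elimination entry equals $\pm\det A^P_{j,k}$, the pivot block having determinant $\pm 1$), your key claim is literally equivalent to Cases~\ref{det1} and~\ref{det}, and your proposed induction matching elimination steps to the Fibonacci trees, including the $S'(P)$-corrections when $\mathcal{A}(P)\neq\emptyset$, is exactly the computation the paper performs by iterated cofactor expansion, sign bookkeeping and all. As written, your proposal sketches this induction and checks one example, thereby deferring precisely the part that constitutes nearly all of the paper's proof. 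One further bookkeeping warning should you execute it: with the paper's literal definition of $\mathcal{L}^P$, the set $JK$ contains pairs with $j=i_{2\nu}$ (second case of $\mathcal{L}^P$, when $i_{2n}+1=i_{2n+1}$); such $j$ label pivot rows, not extra rows, so these pairs correspond to no entry of your normal form, and your closing claim that the bordered minors ``exhaust the generators of $\mathcal{K}(P)$'' tacitly requires $i_{2\nu-1}+1\le j\le i_{2\nu}-1$. The paper's own construction has the same defect ($A^P_{j,k}$ fails to be square for such pairs), so this is inherited rather than introduced by you, but your normal-form picture makes the needed restriction on $JK$ visible, and any complete write-up should address it.
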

	
	\begin{remark}\label{remark20240119}
		As a corollary, we can construct every subrepresentation of $M(m)$. Let $W(P)$ be the set of all variables appearing in $N_1(P)$. Thanks to Remark~\ref{rem}, there are two subsets $A$ and $B$ of $W(P)$ such that $|A|=|D(P)|$, $A\cup B=W(P)$, $A\cap B=\emptyset$, and that we can substitute an arbitrary complex number for each variable in $B$, which will uniquely determine a complex number for  each variable in $A$ satisfying all $\widehat{D^P_{j,k}}=0$. 
		This uniquely determines a subrepresentation of dimension $(e_1,e_2)$.
		By letting complex  numbers vary for variables in $B$, we get all subrepresentations of dimension $(e_1,e_2)$ in the cell corresponding to $P$.
	\end{remark}
	
	\begin{example}
		One possible choice for $A$ is described as follows.
		Let $P=(i_1,i_2,\dots,i_{2n}) \in \mathcal{I}$.
		Then 
		
		$$
		A=\{x(j,k) : \mathcal{L}^P(j,k) \neq \emptyset\}.
		$$

		For example, consider the case where $P=(0,2,4,4,5,6)$ and $m-3=8$.
		Then
		$$N_2(P)=
		\left(\begin{array}{cccccccccc}
			1 & 0 & x_{1,3} & x_{1,4} & x_{1,5} & 0 & x_{1,7} & x_{1,8}\\
			0 & 1 & x_{2,3} & x_{2,4} & x_{2,5} & 0 & x_{2,7} & x_{2,8}\\
			0 & 0 & 0       & 0       & 0       & 1 & x_{6,7} & x_{6,8}\\
		\end{array}\right).$$
		and
		$$S'(P)=
		\left(\begin{array}{cccccccccc}
			0 & 0 & 0 & 0 & 1 & 0 & 0 & y_{5,8} & y_{5,9}
		\end{array}\right).
		$$
		In this case, $D(P)=\{x_{1,3}-x_{2,4},\ x_{1,7}-x_{2,8},\ x_{1,8}\}.$
		Thus, $A=\{x_{1,3},\ x_{1,7},\ x_{1,8}\}$
		and $B=\{x_{1,4},\ x_{1,5},\ x_{2,3},\ x_{2,4},\ x_{2,5},\ x_{2,7},\ x_{2,8},\ x_{6,7},\ x_{6,8},\ y_{5,8},\ y_{5,9}\}$ 
		since
		
		$$\aligned
		\widehat{D^P_{1,3}}=&x_{1,3}-x_{2,4}+x_{2,3}^2\\
		=&D^{P'}_{1,3},\\
		\widehat{D^P_{1,7}}=&x_{1,7}-x_{2,8}+x_{2,7}x_{6,7}-x_{1,5}x_{6,8}+x_{2,3}x_{2,7}-x_{2,3}x_{2,5}x_{6,8}+x_{1,5}x_{6,7}^2+
		x_{2,3}x_{2,5}x_{6,7}^2\\
		&-y_{5,8}(x_{1,4}-x_{2,5}+x_{2,3}x_{2,4})\\
		=&D^P_{1,7}-y_{5,8}D^{P'}_{1,4},\\
		\widehat{D^P_{1,8}}=&x_{1,8}+x_{2,7}x_{6,8}+x_{2,3}x_{2,8}+x_{1,5}x_{6,7}x_{6,8}+x_{2,3}x_{2,5}x_{6,7}x_{6,8}\\
		&-y_{5,9}(x_{1,4}-x_{2,5}+x_{2,3}x_{2,4})\\
		=&D^P_{1,8}-y_{5,9}D^{P'}_{1,4}
		\endaligned$$
		where $P'=(0,2,6,7)$.
		
		Now, let every variable of $B$ be equal to 1. Then
		\begin{enumerate}
			\item $\widehat{D^P_{1,3}}=0$ implies $x_{1,3}=0$,
			\item $\widehat{D^P_{1,7}}=0$ implies $x_{1,7}=0$, and 
			\item $\widehat{D^P_{1,8}}=0$ implies $x_{1,8}=-3$.
		\end{enumerate}
		Thus the corresponding subrepresentation is $(N_2,N_1)$ where
		$N_2$ is the row space of 
		$${\tiny\left(\begin{array}{cccccccccc}
				1 & 0 & 0 & 1 & 1 & 0 & 0 & -3\\
				0 & 1 & 1 & 1 & 1 & 0 & 1 & 1\\
				0 & 0 & 0 & 0 & 0 & 1 & 1 & 1\\
			\end{array}\right)}$$
		and $N_1$ is the row space of 
		$${\tiny\left(\begin{array}{cccccccccc}
				1 & 0 & 0 & 1 & 1 & 0 & 0  & -3 & 0\\
				0 & 1 & 1 & 1 & 1 & 0 & 1  & 1  & 0\\
				0 & 0 & 0 & 0 & 0 & 1 & 1  & 1  & 0\\
				0 & 1 & 0 & 0 & 1 & 1 & 0  & 0  & -3\\
				0 & 0 & 1 & 1 & 1 & 1 & 0  & 1  & 1\\
				0 & 0 & 0 & 0 & 0 & 0 & 1  & 1  & 1\\
				0 & 0 & 0 & 0 & 1 & 0 & 0  & 1  & 1\\
			\end{array}\right)}$$whose rank is equal to $6$.
	\end{example}
	
	\section{Proofs}
	\subsection{Proof of $\mathcal{K}(P)\subseteq\mathcal{J}(P)$}
	Let $1\leq\nu\leq\mu\leq n$ and $j\leq k$ in $[1,m-3]$ 
	such that $i_{2\nu-1}+1 \leq j \leq i_{2\nu}$ and $i_{2\mu}+1 \leq k \leq i_{2\mu+1}$.
	We want to find a $(e_1(P)+1)\times (e_1(P)+1)$ submatrix of $N_1(P)$ whose determinant is equal to $\pm \widehat{D^P_{j,k}}$.
	
	\begin{definition}
		To get the desired submatrix,
		\begin{enumerate}
			\item remove every column of $N_1(P)$ labeled by 
			$$(\{\gamma : 1 \leq \beta \leq n \text{ and } i_{2\beta}+2 \leq \gamma \leq i_{2\beta+1}\}\cup\{m-2\})-\{k+1\};$$
			\item remove every row from $N_1(P)$ labeled by 
			$$\{\underline{\gamma} : 1 \leq \beta \leq n \text{ and } i_{2\beta-1}+1 \leq \gamma \leq i_{2\beta}-1\}
			-\{\underline{j}\}.$$
		\end{enumerate}
	\end{definition}

	This submatrix of $N_1(P)$ will be referred to as $A^P_{j,k}$.
	
	\begin{example}\label{impex}
		Let $m-3=16$ and consider $P=(0,2,3,3,4,6,7,7,8,10,12,14)\in\mathcal I$.
		Here, $e_2(P)=8$, $e_1(P)=8+6=14$, and the matrix obtained by removing all zero columns from $N_1(P)$ is
		$${\tiny
			\left(
			\begin{array}{ccccccccccccccccc}
				1 & 0 & x_{1,3} & x_{1,4} & 0       & 0 & x_{1,7} & x_{1,8} & 0       & 0       & x_{1,11}  & x_{1,12}  & 0         & 0        & x_{1,15}  & x_{1,16}  & 0\\
				0 & 1 & x_{2,3} & x_{2,4} & 0       & 0 & x_{2,7} & x_{2,8} & 0       & 0       & x_{2,11}  & x_{2,12}  & 0         & 0        & x_{2,15}  & x_{2,16}  & 0\\
				0 & 0 & 0       & 0       & 1       & 0 & x_{5,7} & x_{5,8} & 0       & 0       & x_{5,11}  & x_{5,12}  & 0         & 0        & x_{5,15}  & x_{5,16}  & 0\\
				0 & 0 & 0       & 0       & 0       & 1 & x_{6,7} & x_{6,8} & 0       & 0       & x_{6,11}  & x_{6,12}  & 0         & 0        & x_{6,15}  & x_{6,16}  & 0\\
				0 & 0 & 0       & 0       & 0       & 0 & 0       &       0 & 1       & 0       & x_{9,11}  & x_{9,12}  & 0         & 0        & x_{9,15}  & x_{9,16}  & 0\\
				0 & 0 & 0       & 0       & 0       & 0 & 0       &       0 & 0       & 1       & x_{10,11} & x_{10,12} & 0         & 0        & x_{10,15} & x_{10,16} & 0\\
				0 & 0 & 0       & 0       & 0       & 0 & 0       &       0 & 0       & 0       & 0         & 0         & 1         & 0        & x_{13,15} & x_{13,16} & 0\\
				0 & 0 & 0       & 0       & 0       & 0 & 0       &       0 & 0       & 0       & 0         & 0         & 0         & 1        & x_{14,15} & x_{14,16} & 0\\
				0 & 1 & 0       & x_{1,3} & x_{1,4} & 0 & 0       & x_{1,7} & x_{1,8} & 0       & 0         & x_{1,11}  & x_{1,12}  & 0        & 0         & x_{1,15}  & x_{1,16}\\
				0 & 0 & 1       & x_{2,3} & x_{2,4} & 0 & 0       & x_{2,7} & x_{2,8} & 0       & 0         & x_{2,11}  & x_{2,12}  & 0        & 0         & x_{2,15}  & x_{2,16}\\
				0 & 0 & 0       & 0       & 0       & 1 & 0       & x_{5,7} & x_{5,8} & 0       & 0         & x_{5,11}  & x_{5,12}  & 0        & 0         & x_{5,15}  & x_{5,16}\\
				0 & 0 & 0       & 0       & 0       & 0 & 1       & x_{6,7} & x_{6,8} & 0       & 0         & x_{6,11}  & x_{6,12}  & 0        & 0         & x_{6,15}  & x_{6,16}\\
				0 & 0 & 0       & 0       & 0       & 0 & 0       & 0       &       0 & 1       & 0         & x_{9,11}  & x_{9,12}  & 0        & 0         & x_{9,15}  & x_{9,16}\\
				0 & 0 & 0       & 0       & 0       & 0 & 0       & 0       &       0 & 0       & 1         & x_{10,11} & x_{10,12} & 0        & 0         & x_{10,15} & x_{10,16}\\
				0 & 0 & 0       & 0       & 0       & 0 & 0       & 0       &       0 & 0       & 0         & 0         & 0         & 1        & 0         & x_{13,15} & x_{13,16}\\
				0 & 0 & 0       & 0       & 0       & 0 & 0       & 0       &       0 & 0       & 0         & 0         & 0         & 0        & 1         & x_{14,15} & x_{14,16}\\
				0 & 0 & 0       & 1       & 0       & 0 & 0       & 0       &       0 & 0       & 0         & y_{4,12}  & 0         & 0        & 0         & y_{4,16}  & y_{4,17}\\
				0 & 0 & 0       & 0       & 0       & 0 & 0       & 1       &       0 & 0       & 0         & y_{8,12}  & 0         & 0        & 0         & y_{8,16}  & y_{8,17}\\
			\end{array}\right)}.$$
		Consider $\mathcal{L}^P(5,11)=\{x_{5,11},x_{6,12}\}$.
		Now we remove all the columns labeled by 
		$$\{\gamma : 1 \leq \beta \leq 6\text{ and } i_{2\beta}+2 \leq \gamma \leq i_{2\beta+1}\}-\{12\}=\{16,17\}$$ 
		to get
		$${\tiny
			\left(
			\begin{array}{ccccccccccccccccc}
				1 & 0 & x_{1,3} & x_{1,4} & 0       & 0 & x_{1,7} & x_{1,8} & 0       & 0       & x_{1,11}  & x_{1,12}  & 0         & 0        & x_{1,15}  \\
				0 & 1 & x_{2,3} & x_{2,4} & 0       & 0 & x_{2,7} & x_{2,8} & 0       & 0       & x_{2,11}  & x_{2,12}  & 0         & 0        & x_{2,15}  \\
				0 & 0 & 0       & 0       & 1       & 0 & x_{5,7} & x_{5,8} & 0       & 0       & x_{5,11}  & x_{5,12}  & 0         & 0        & x_{5,15}  \\
				0 & 0 & 0       & 0       & 0       & 1 & x_{6,7} & x_{6,8} & 0       & 0       & x_{6,11}  & x_{6,12}  & 0         & 0        & x_{6,15}  \\
				0 & 0 & 0       & 0       & 0       & 0 & 0       &       0 & 1       & 0       & x_{9,11}  & x_{9,12}  & 0         & 0        & x_{9,15}  \\
				0 & 0 & 0       & 0       & 0       & 0 & 0       &       0 & 0       & 1       & x_{10,11} & x_{10,12} & 0         & 0        & x_{10,15} \\
				0 & 0 & 0       & 0       & 0       & 0 & 0       &       0 & 0       & 0       & 0         & 0         & 1         & 0        & x_{13,15} \\
				0 & 0 & 0       & 0       & 0       & 0 & 0       &       0 & 0       & 0       & 0         & 0         & 0         & 1        & x_{14,15} \\
				0 & 1 & 0       & x_{1,3} & x_{1,4} & 0 & 0       & x_{1,7} & x_{1,8} & 0       & 0         & x_{1,11}  & x_{1,12}  & 0        & 0         \\
				0 & 0 & 1       & x_{2,3} & x_{2,4} & 0 & 0       & x_{2,7} & x_{2,8} & 0       & 0         & x_{2,11}  & x_{2,12}  & 0        & 0         \\
				0 & 0 & 0       & 0       & 0       & 1 & 0       & x_{5,7} & x_{5,8} & 0       & 0         & x_{5,11}  & x_{5,12}  & 0        & 0         \\
				0 & 0 & 0       & 0       & 0       & 0 & 1       & x_{6,7} & x_{6,8} & 0       & 0         & x_{6,11}  & x_{6,12}  & 0        & 0         \\
				0 & 0 & 0       & 0       & 0       & 0 & 0       & 0       &       0 & 1       & 0         & x_{9,11}  & x_{9,12}  & 0        & 0         \\
				0 & 0 & 0       & 0       & 0       & 0 & 0       & 0       &       0 & 0       & 1         & x_{10,11} & x_{10,12} & 0        & 0         \\
				0 & 0 & 0       & 0       & 0       & 0 & 0       & 0       &       0 & 0       & 0         & 0         & 0         & 1        & 0         \\
				0 & 0 & 0       & 0       & 0       & 0 & 0       & 0       &       0 & 0       & 0         & 0         & 0         & 0        & 1         \\
				0 & 0 & 0       & 1       & 0       & 0 & 0       & 0       &       0 & 0       & 0         & y_{4,12}  & 0         & 0        & 0         \\
				0 & 0 & 0       & 0       & 0       & 0 & 0       & 1       &       0 & 0       & 0         & y_{8,12}  & 0         & 0        & 0         \\
			\end{array}\right)}.$$
		
		Now remove all the rows labeled by
		$$\{\underline{\gamma} : 1 \leq \beta \leq 6 \text{ and } i_{2\beta-1}+1 \leq \gamma \leq i_{2\beta}-1\}-\{\underline{5}\}=
		\{\underline{1},\underline{9},\underline{13}\}$$
		to get
		$$A^P_{5,11}={\tiny
			\left(
			\begin{array}{ccccccccccccccccc}
				1 & 0 & x_{1,3} & x_{1,4} & 0       & 0 & x_{1,7} & x_{1,8} & 0       & 0       & x_{1,11}  & x_{1,12}  & 0         & 0        & x_{1,15}  \\
				0 & 1 & x_{2,3} & x_{2,4} & 0       & 0 & x_{2,7} & x_{2,8} & 0       & 0       & x_{2,11}  & x_{2,12}  & 0         & 0        & x_{2,15}  \\
				0 & 0 & 0       & 0       & 1       & 0 & x_{5,7} & x_{5,8} & 0       & 0       & x_{5,11}  & x_{5,12}  & 0         & 0        & x_{5,15}  \\
				0 & 0 & 0       & 0       & 0       & 1 & x_{6,7} & x_{6,8} & 0       & 0       & x_{6,11}  & x_{6,12}  & 0         & 0        & x_{6,15}  \\
				0 & 0 & 0       & 0       & 0       & 0 & 0       &       0 & 1       & 0       & x_{9,11}  & x_{9,12}  & 0         & 0        & x_{9,15}  \\
				0 & 0 & 0       & 0       & 0       & 0 & 0       &       0 & 0       & 1       & x_{10,11} & x_{10,12} & 0         & 0        & x_{10,15} \\
				0 & 0 & 0       & 0       & 0       & 0 & 0       &       0 & 0       & 0       & 0         & 0         & 1         & 0        & x_{13,15} \\
				0 & 0 & 0       & 0       & 0       & 0 & 0       &       0 & 0       & 0       & 0         & 0         & 0         & 1        & x_{14,15} \\
				0 & 0 & 1       & x_{2,3} & x_{2,4} & 0 & 0       & x_{2,7} & x_{2,8} & 0       & 0         & x_{2,11}  & x_{2,12}  & 0        & 0         \\
				0 & 0 & 0       & 0       & 0       & 1 & 0       & x_{5,7} & x_{5,8} & 0       & 0         & x_{5,11}  & x_{5,12}  & 0        & 0         \\
				0 & 0 & 0       & 0       & 0       & 0 & 1       & x_{6,7} & x_{6,8} & 0       & 0         & x_{6,11}  & x_{6,12}  & 0        & 0         \\
				0 & 0 & 0       & 0       & 0       & 0 & 0       & 0       &       0 & 0       & 1         & x_{10,11} & x_{10,12} & 0        & 0         \\
				0 & 0 & 0       & 0       & 0       & 0 & 0       & 0       &       0 & 0       & 0         & 0         & 0         & 0        & 1         \\
				0 & 0 & 0       & 1       & 0       & 0 & 0       & 0       &       0 & 0       & 0         & y_{4,12}  & 0         & 0        & 0         \\
				0 & 0 & 0       & 0       & 0       & 0 & 0       & 1       &       0 & 0       & 0         & y_{8,12}  & 0         & 0        & 0         \\
			\end{array}\right)},$$
		a $15 \times 15$ matrix.
	\end{example}
	
	\begin{remark}
		Notes about $A^P_{j,k}$:
		\begin{enumerate}
			\item it is a $(e_1(P)+1) \times (e_1(P)+1)$ matrix,
			\item when we want to remember the label of a column, a row or a position of $A^P_{j,k}$, 
			we will keep the labeling of $N_1(P)$,
		\end{enumerate}
	\end{remark}

	\begin{case}
		Assume that $\mathcal{A}(P)=\emptyset.$ Then
		\label{det1}
		$\det(A^P_{j,k})=\pm D^P_{j,k}$.
	\end{case}
	\begin{proof}
		Let $\nu \leq m_1 \leq \mu$.
		By induction, it suffices to show that $\det(A_{j,k})$ contains
		$$\pm \left(\varphi_{1}^{(j,k)}(\mu)\varphi_{1}^{(j,k)}(\widetilde{\mu})+\varphi_{1}^{(j,k)}(\mu,m_1)\varphi_{1}^{(j,k)}(\widetilde{\mu,m_1})-
		\varphi_{2}^{(j,k)}(\mu)\varphi_{2}^{(j,k)}(\widetilde{\mu})-\varphi_{2}^{(j,k)}(\mu,m_1)\varphi_{2}^{(j,k)}(\widetilde{\mu,m_1})\right).$$
		
		To see that this is indeed the case, we will use cofactor expansion along 
		the column labeled by $(k+1)$.
		
		Remember that $\mathcal{A}(P)=\emptyset.$
		In the case where $k<m-3$, the cofactor of $x(\alpha,k+1)$ for $1 \leq \alpha \leq i_{2\nu}$ 
		such that 
		$$\alpha \notin \{i_{2\beta-1}+1: 2 \leq \beta \leq \nu\} \cup \{j+1\}$$ is equal to 0 
		since the resulting minor has a  zero column.
		The cofactor of $x(\alpha,k+1)$ for $$\alpha \in \{i_{2\beta-1}+1 : 2 \leq \beta \leq \nu\}$$
		is equal to 0 since the resulting minor contains a set of linearly dependent columns labeled by
		$$\left\{\overline{j+1}\right\} \cup \left\{\overline{i_{2\beta}+1}: \nu \leq \beta \leq n-1\right\}
		\cup \left\{\underline{j}\right\} \cup \left\{\underline{i_{2\beta}}: \nu \leq \beta \leq n\right\}:$$ 
		notice that the row labeled by $\overline{j+1}$ has a 1-entry at the column labeled by $(j+1)$,
		variables in the column labeled by $(i_{2\beta}+1)$ for every $\nu < \beta \leq n$, and 0 everywhere else.
		Furthermore the row labeled by $\underline{j}$ has a 1-entry at the column labeled by $(j+1)$, variables in the
		column labeled by $i_{2\beta}$ for every $\nu < \beta \leq n$, and 0 everywhere else. 
		We can perform row operations using the rows labeled by 
		$$\left\{\overline{j+1}\right\} \cup \left\{\overline{i_{2\beta}+1}: \nu \leq \beta \leq n\right\} \cup
		\left\{\underline{j}\right\} \cup \left\{\underline{i_{2\beta}}: \nu \leq \beta \leq n\right\}$$ so that the row labeled by $\overline{j+1}$
		and the row labeled by $\underline{j}$ have a 1-entry in the column labeled by $(j+1)$ and 0 everywhere else.
		This argument also works for justifying that the cofactor of $x(i_{2\beta},k)$ where $1 \leq \beta \leq \nu-1$ is 0, 
		regardless of whether or not $k=m-3$.

		\begin{quote}
			To illustrate, consider the case where $P = (0,2,4,6,8,10,12,14) \in \mathcal{I}$
			$$A^P_{5,11}={\tiny
				\left(
				\begin{array}{ccccccccccccccccc}
					1 & 0 & x_{1,3} & 0       & 0 & x_{1,7} & 0       & 0       & x_{1,11}  & x_{1,12}  & 0         & 0        & x_{1,15}  \\
					0 & 1 & x_{2,3} & 0       & 0 & x_{2,7} & 0       & 0       & x_{2,11}  & x_{2,12}  & 0         & 0        & x_{2,15}  \\
					0 & 0 & 0       & 1       & 0 & x_{5,7} & 0       & 0       & x_{5,11}  & x_{5,12}  & 0         & 0        & x_{5,15}  \\
					0 & 0 & 0       & 0       & 1 & x_{6,7} & 0       & 0       & x_{6,11}  & x_{6,12}  & 0         & 0        & x_{6,15}  \\
					0 & 0 & 0       & 0       & 0 & 0       & 1       & 0       & x_{9,11}  & x_{9,12}  & 0         & 0        & x_{9,15}  \\
					0 & 0 & 0       & 0       & 0 & 0       & 0       & 1       & x_{10,11} & x_{10,12} & 0         & 0        & x_{10,15} \\
					0 & 0 & 0       & 0       & 0 & 0       & 0       & 0       & 0         & 0         & 1         & 0        & x_{13,15} \\
					0 & 0 & 0       & 0       & 0 & 0       & 0       & 0       & 0         & 0         & 0         & 1        & x_{14,15} \\
					0 & 0 & 1       & x_{2,4} & 0 & 0       & x_{2,8} & 0       & 0         & x_{2,11}  & x_{2,12}  & 0        & 0         \\
					0 & 0 & 0       & 0       & 1 & 0       & x_{5,8} & 0       & 0         & x_{5,11}  & x_{5,12}  & 0        & 0         \\
					0 & 0 & 0       & 0       & 0 & 1       & x_{6,8} & 0       & 0         & x_{6,11}  & x_{6,12}  & 0        & 0         \\
					0 & 0 & 0       & 0       & 0 & 0       &       0 & 0       & 1         & x_{10,11} & x_{10,12} & 0        & 0         \\
					0 & 0 & 0       & 0       & 0 & 0       &       0 & 0       & 0         & 0         & 0         & 0        & 1         \\
				\end{array}\right)}.$$ 
			Here, $(\nu,\mu)=(2,3)$, $n=4$, $(j,k)=(5,11)$, $m-3=16$, and the column labeled by 12 in $A^P_{5,11}$ is 
			$$\left(
			\begin{array}{ccccc}
				x_{1,12}&
				x_{2,12}&
				\cdots&
				x_{10,11}&
				0
			\end{array}\right)^T.$$
			By removing the row and column containing $x_{\alpha,12}$ where $\alpha \in \{1,2,10\}$,
			we get a matrix with where column $\alpha$ only has 0 entries. 
			(Remember that we are using the same labels used for the columns of $N_2(P)$.)
			By removing the row and column containing $x_{5,12}$ or $x_{2,11}$,
			we see that the rows labeled by $\overline{6},\ \overline{9},\ \underline{5},\ \underline{6},$ 
			and $\underline{10}$ are linearly dependent.
			Thus the cofactors of $x_{1,12}$, $x_{2,12}$, $x_{10,12}$, $x_{5,12}$, and $x_{2,11}$ are 0.
		\end{quote}
		
		The submatrix of $A^P_{j,k}$ obtained by removing the rows labeled by $S_1$ and the columns labeled by $S_2$
		will be written as $A^P_{j,k}(S_1;S_2)$. 
		
		To compute the cofactor of $x(j+1,k+1)$, observe that it lies 
		\begin{enumerate}
			\item in row $\left(\sum\limits_{\beta=1}^{\nu-1}i_{2\beta}-i_{2\beta-1}\right)+(j-i_{2\nu-1})$ of $A^P_{j,k}$
			since the height of $N^{(2)}_{\beta,\mu}$ is $i_{2\beta}-i_{2\beta-1}$ and $x(j+1,k+1)$ is in row $(j-i_{2\nu-1})$ of $N^{(2)}_{\nu,\mu}$
			and
			\item in column $\left[\left(\sum\limits_{\beta=1}^\mu i_{2\beta}-i_{2\beta-1}\right)+\mu+1\right]$ of $A^P_{j,k}$
			since the width of $N^{(2)}_{\nu,\beta}$ has been reduced to $i_{2\beta}-i_{2\beta-1}+1$ in $A^P_{j,k}$.
		\end{enumerate}
		
		Thus the cofactor of $x(j+1,k+1)$ is the product of $M^0=\det(A_{j,k}(\overline{j+1};k+1))$ and   $$r^0=\text{sgn}\left(\left(\sum\limits_{\beta=1}^{\nu-1}i_{2\beta}-i_{2\beta-1}\right)+(j-i_{2\nu-1})+
		\left(\sum\limits_{\beta=1}^\mu i_{2\beta}-i_{2\beta-1}\right)+\mu+1\right),$$
		where $\text{sgn}\colon\mathbb{Z}\longrightarrow \{1,-1\}$ is the standard group homomorphism. 
		To compute $M^0$, first take the cofactor expansion down the column labeled by $i_2+1$.
		It is not difficult to see that the only nonzero cofactor is the cofactor of 1 in the 
		$\sum\limits_{\beta=1}^{n}i_{2\beta}-i_{2\beta-1}$ row of $A^P_{j,k}(\overline{j+1};k+1)$.
		Thus $$M^0=\text{sgn}\left(\left[\sum\limits_{\beta=1}^{n}i_{2\beta}-i_{2\beta-1}\right]+\left[(i_2-i_1)+1\right]\right)
		\det(A^P_{j,k}(\underline{i_2},\overline{j+1};i_2+1,k+1)).$$
		To compute $\det(A^P_{j,k}(\underline{i_2},\overline{j+1};i_2+1,k+1))$, 
		take the cofactor expansion of down the column labeled by $i_4+1$.
		Again, the only nonzero cofactor is the cofactor of 1 in 
		the $\sum\limits_{\beta=1}^{n}i_{2\beta}-i_{2\beta-1}$ row and
		the $\sum\limits_{\beta=1}^2 i_{2\beta}-i_{2\beta-1}$ column of $A^P_{j,k}(\underline{i_2},\overline{j+1};i_2+1,k+1)$.
		Thus, 
		$$M^0=\text{sgn}\left(\sum\limits_{\ell=1}^2\left(\left[\sum\limits_{\beta=1}^{n}i_{2\beta}-i_{2\beta-1}\right]+
		\left[\left(\sum\limits_{\beta=1}^\ell i_{2\beta}-i_{2\beta-1}\right)+1\right]\right)\right)
		\det(A^P_{j,k}(\overline{j+1},\underline{i_2},\underline{i_4};k+1,i_2+1,i_4+1)).$$
		By continuing in this manner, we see that 
		$$M^0=\text{sgn}\left(\sum\limits_{\ell=1}^{\nu-1}\left(\left[\sum\limits_{\beta=1}^{n}i_{2\beta}-i_{2\beta-1}\right]+
		\left[\left(\sum\limits_{\beta=1}^\ell i_{2\beta}-i_{2\beta-1}\right)+1\right]\right)\right)\det(A^P_{j,k}(S_1;S_2)).$$
		where $S_1=\{\underline{i_2},\underline{i_4},\dots,\underline{i_{2(\nu-1)}},\overline{j+1}\}$ and
		$S_2=\{i_2+1,i_4+1,\dots,i_{2(\nu-1)}+1,k+1\}$.
		
		To compute $\det(A^P_{j,k}(S_1;S_2))$, take the cofactor expansion down the column labeled by $i_{2\nu}$.
		Since the only nonzero entry of the column labeled by $i_{2\nu}$ is its row labeled by $\underline{j}$. 
		Thus $$\det(A^P_{j,k}(S_1;S_2))=\text{sgn}\left(\sum\limits_{\beta=1}^{n}i_{2\beta}-i_{2\beta-1}+
		\sum\limits_{\beta=1}^{\nu-1}i_{2\beta}-i_{2\beta-1}+(j-i_{2\nu-1}+1)\right)
		\det(A^P_{j,k}(S_1\cup\{\underline{j}\};S_2\cup\{i_{2\nu}\})).$$
		To compute $\det(A^P_{j,k}(S_1\cup\{\underline{j}\};S_2\cup\{i_{2\nu}\})),$
		take the cofactor expansion down the column labeled by $i_{2\nu}+1$, whose only nonzero entry is in the row labeled by $\underline{i_{2\nu}}$.
		By continuing this process, we get that $M_0$ is the product of
		\begin{enumerate}
			\item $\text{sgn}\left(\sum\limits_{\ell=1}^{\nu-1}\left(
			\left[\sum\limits_{\beta=1}^{n}i_{2\beta}-i_{2\beta-1}\right]+
			\left[\sum\limits_{\beta=1}^\ell(i_{2\beta}-i_{2\beta-1})+1\right]\right)\right)$,
			\item $\text{sgn}\left(\left[\sum\limits_{\beta=1}^{n}i_{2\beta}-i_{2\beta-1}\right]+
			\left[\sum\limits_{\beta=1}^{\nu-1}(i_{2\beta}-i_{2\beta-1})+(j-i_{2\nu-1}+1)\right]\right)$,
			\item $\text{sgn}\left(\sum\limits_{\ell=1}^{\nu-1}\left(\left[\sum\limits_{\beta=1}^{n}i_{2\beta}-i_{2\beta-1}\right]+
			\left[\sum\limits_{\beta=1}^\ell i_{2\beta}-i_{2\beta-1}\right]\right)\right)\det(A^P_{j,k}(S_1';S_2'))$ 
			where $S_1'=S_1 \cup\{\underline{j},\underline{i_{2\nu}},\underline{i_{2(\nu+1)}},\dots,\underline{i_{2n}}\}$ and
			$S_2'=S_2 \cup \{i_{2\nu},i_{2\nu}+1,i_{2(\nu+1)}+1,\dots,i_{2n}+1\}$.
		\end{enumerate}   
		
		Since $A^P_{j,k}(S_1';S_2')$ is an upper triangular matrix with only 1 entries in the diagonal,
		$\det(A^P_{j,k}(S_1';S_2'))=1$.
		Thus $r^0M^0x(j+1,k+1)=r^0M^0\varphi_2^{(j,k)}(\mu)\varphi_2^{(j,k)}(\widetilde{\mu})$ is a term of $\det(A_{j,k})$.
		\begin{quote}
			To illustrate, let us return to our running example.
			
			Recall that $P = (0,2,4,6,8,10,12,14) \in \mathcal{I}$.
			The cofactor of $x_{6,12}$ is
			$$\text{sgn}\left(\left[\left(\sum\limits_{\beta=1}^1 i_{2\beta}-i_{2\beta-1}\right)+2\right]+
			\left[\left(\sum\limits_{\beta=1}^3 i_{2\beta}-i_{2\beta-1}\right)+4\right]\right)\cdot
			\det(A^P_{5,11}(\overline{6};12)).$$
			Note that 
			$$A^P_{5,11}(\overline{6};12)={\tiny
				\left(
				\begin{array}{ccccccccccccccccc}
					1 & 0 & x_{1,3} & 0       & 0 & x_{1,7} & 0       & 0       & x_{1,11}  & 0         & 0        & x_{1,15}  \\
					0 & 1 & x_{2,3} & 0       & 0 & x_{2,7} & 0       & 0       & x_{2,11}  & 0         & 0        & x_{2,15}  \\
					0 & 0 & 0       & 1       & 0 & x_{5,7} & 0       & 0       & x_{5,11}  & 0         & 0        & x_{5,15}  \\
					0 & 0 & 0       & 0       & 0 & 0       & 1       & 0       & x_{9,11}  & 0         & 0        & x_{9,15}  \\
					0 & 0 & 0       & 0       & 0 & 0       & 0       & 1       & x_{10,11} & 0         & 0        & x_{10,15} \\
					0 & 0 & 0       & 0       & 0 & 0       & 0       & 0       & 0         & 1         & 0        & x_{13,15} \\
					0 & 0 & 0       & 0       & 0 & 0       & 0       & 0       & 0         & 0         & 1        & x_{14,15} \\
					0 & 0 & 1       & x_{2,4} & 0 & 0       & x_{2,8} & 0       & 0         & x_{2,12}  & 0        & 0         \\
					0 & 0 & 0       & 0       & 1 & 0       & x_{5,8} & 0       & 0         & x_{5,12}  & 0        & 0         \\
					0 & 0 & 0       & 0       & 0 & 1       & x_{6,8} & 0       & 0         & x_{6,12}  & 0        & 0         \\
					0 & 0 & 0       & 0       & 0 & 0       &       0 & 0       & 1         & x_{10,12} & 0        & 0         \\
					0 & 0 & 0       & 0       & 0 & 0       &       0 & 0       & 0         & 0         & 0        & 1         \\
				\end{array}\right)}.
			$$
			To compute $\det(A^P_{5,11}(\overline{6};12))$, take the cofactor expansion down the column labeled by $i_2+1=3$.
			The only nonzero cofactor is of 1 in the $\sum\limits_{\beta=1}^4 i_{2\beta}-i_{2\beta-1}=8$ row.
			Thus
			$$M^0=\text{sgn}\left(\left[\sum\limits_{\beta=1}^4 i_{2\beta}-i_{2\beta-1}\right]+
			\left[\left(\sum\limits_{\beta=1}^1 i_{2\beta}-i_{2\beta-1}\right)+1\right]\right)\cdot
			\det(A^P_{5,11}(\underline{2},\overline{6};3,12)).$$
			To compute $\det(A^P_{5,11}(\underline{2},\overline{6};3,12))$, note that 
			$$A^P_{5,11}(\underline{2},\overline{6};3,12)={\tiny
				\left(
				\begin{array}{ccccccccccccccccc}
					1 & 0 & 0       & 0 & x_{1,7} & 0       & 0       & x_{1,11}  & 0         & 0        & x_{1,15}  \\
					0 & 1 & 0       & 0 & x_{2,7} & 0       & 0       & x_{2,11}  & 0         & 0        & x_{2,15}  \\
					0 & 0 & 1       & 0 & x_{5,7} & 0       & 0       & x_{5,11}  & 0         & 0        & x_{5,15}  \\
					0 & 0 & 0       & 0 & 0       & 1       & 0       & x_{9,11}  & 0         & 0        & x_{9,15}  \\
					0 & 0 & 0       & 0 & 0       & 0       & 1       & x_{10,11} & 0         & 0        & x_{10,15} \\
					0 & 0 & 0       & 0 & 0       & 0       & 0       & 0         & 1         & 0        & x_{13,15} \\
					0 & 0 & 0       & 0 & 0       & 0       & 0       & 0         & 0         & 1        & x_{14,15} \\
					0 & 0 & 0       & 1 & 0       & x_{5,8} & 0       & 0         & x_{5,12}  & 0        & 0         \\
					0 & 0 & 0       & 0 & 1       & x_{6,8} & 0       & 0         & x_{6,12}  & 0        & 0         \\
					0 & 0 & 0       & 0 & 0       &       0 & 0       & 1         & x_{10,12} & 0        & 0         \\
					0 & 0  & 0       & 0 & 0       &       0 & 0       & 0         & 0         & 0        & 1         \\
				\end{array}\right)}.
			$$
			Take the cofactor expansion down the column labeled by $i_4=6$. 
			(This is column 4 of $A^P_{5,11}(\underline{2},\overline{6};3,12)$.)
			The only nonzero cofactor is clearly the cofactor of 1 
			which is on the row labeled by $\underline{j}=\underline{5}$.
			(This is row 8 of $A^P_{5,11}(\underline{2},\overline{6};3,12)$.)
			Thus $$\det(A^P_{5,11}(\underline{2},\overline{6};3,12))=
			\text{sgn}\left(\sum\limits_{\beta=1}^3 i_{2\beta}-i_{2\beta-1}+
			\left[\left(\sum\limits_{\beta=1}^1 i_{2\beta}-i_{2\beta-1}\right)+2\right]\right)\cdot
			\det(A^P_{5,11}(\underline{2},\underline{5},\overline{6};3,6,12)).$$
			
			To compute $\det(A^P_{5,11}(\underline{2},\underline{5},\overline{6};3,6,12))$, note that
			$$A^P_{5,11}(\underline{2},\underline{5},\overline{6};3,6,12)={\tiny
				\left(
				\begin{array}{ccccccccccccccccc}
					1 & 0 & 0       & x_{1,7} & 0       & 0       & x_{1,11}  & 0         & 0        & x_{1,15}  \\
					0 & 1 & 0       & x_{2,7} & 0       & 0       & x_{2,11}  & 0         & 0        & x_{2,15}  \\
					0 & 0 & 1       & x_{5,7} & 0       & 0       & x_{5,11}  & 0         & 0        & x_{5,15}  \\
					0 & 0 & 0       & 0       & 1       & 0       & x_{9,11}  & 0         & 0        & x_{9,15}  \\
					0 & 0 & 0       & 0       & 0       & 1       & x_{10,11} & 0         & 0        & x_{10,15} \\
					0 & 0 & 0       & 0       & 0       & 0       & 0         & 1         & 0        & x_{13,15} \\
					0 & 0 & 0       & 0       & 0       & 0       & 0         & 0         & 1        & x_{14,15} \\
					0 & 0 & 0       & 1       & x_{6,8} & 0       & 0         & x_{6,12}  & 0        & 0         \\
					0 & 0 & 0       & 0       &       0 & 0       & 1         & x_{10,12} & 0        & 0         \\
					0 & 0  & 0      & 0       &       0 & 0       & 0         & 0         & 0        & 1         \\
				\end{array}\right)}.
			$$
			Take the cofactor expansion down the column labeled by $i_4+1=4$.
			(This is column 4 of $A^P_{5,11}(\underline{2},\overline{6};3,12)$.)
			We see that the only nonzero cofactor is the cofactor of 1 which lies on the row labeled by $\underline{i_4}$.
			(The row containing $x(2,4)$.)
			Thus $$\det(A^P_{5,11}(\underline{2},\underline{5},\overline{6};3,6,12))=
			\text{sgn}\left(\sum\limits_{\beta=1}^4 i_{2\beta}-i_{2\beta-1}+
			\sum\limits_{\beta=1}^2 i_{2\beta}-i_{2\beta-1}\right)\cdot
			\det(A^P_{5,11}(\underline{2},\underline{5},\overline{6},\underline{6};3,6,7,12)).$$
			By repeating the same approach, we see that 
			$$M^0=
			\text{sgn}\left(\sum\limits_{\ell=2}^{4}\left(\left[\sum\limits_{\beta=1}^4 i_{2\beta}-i_{2\beta-1}\right]+
			\left[\sum\limits_{\beta=1}^\ell i_{2\beta}-i_{2\beta-1}\right]\right)\right)\cdot
			\det(A^P_{5,11}(S_1';S_2')).$$
			where $S_1'=\{\underline{2},\underline{5},\overline{6},\underline{8},\underline{10},\underline{14}\}$
			and $S_2'=\{3,6,7,11,12,15\}.$
			Since $A^P_{5,11}(S_1';S_2')$ is the identity matrix, $\det(A^P_{5,11}(S_1';S_2'))=1$.
			Thus
			{\tiny$$\text{sgn}\left(
				\left[\sum\limits_{\beta=1}^1 (i_{2\beta}-i_{2\beta-1})+2\right]+
				\left[\sum\limits_{\beta=1}^3 (i_{2\beta}-i_{2\beta-1})+4\right]+
				\sum\limits_{\ell=2}^{4}\left(
				\left[\sum\limits_{\beta=1}^4 i_{2\beta}-i_{2\beta-1}\right]+
				\left[\sum\limits_{\beta=1}^\ell i_{2\beta}-i_{2\beta-1}\right]\right)\right)x_{6,12}$$}
			is a term of $\det(A^P_{5,11}).$
		\end{quote}    
		Returning to the proof, a similar approach shows that the cofactor of $x(j,k)$ is 
		$$r_0=\text{sgn}\left(\left[\left(\sum\limits_{\beta=1}^n i_{2\beta}-i_{2\beta-1}\right)+\nu\right]+
		\left[\left(\sum\limits_{\beta=1}^\mu i_{2\beta}-i_{2\beta-1}\right)+\mu+1\right]\right)$$ 
		times $M_0=\det(A^P_{j,k}(\underline{j},k+1))$, which is equal to 
		$$\text{sgn}\left(
		\left[\sum\limits_{\ell=1}^n\left(\left(\sum\limits_{\beta=1}^n i_{2\beta}-i_{2\beta-1}\right)+1\right]+
		\left[\left(\sum\limits_{\beta=1}^\ell i_{2\beta}-i_{2\beta-1}\right)+1\right]\right)\right).$$
		Thus $$r_0M_0x(j,k)=r_0M_0\varphi_1^{(j,k)}(\widetilde{\mu})\varphi_1^{(j,k)}(\mu)$$ 
		is a term in $\det(A^P_{j,k})$.
		
		Note that $r_0M_0x(j,k)$ and $r^0M^0x(j+1,k+1)$ have opposite signs:
		\begin{enumerate}
			\item 
			\begin{align*}
				\dfrac{r^0}{r_0}&=\dfrac{\text{sgn}\left(\left[\left(\sum\limits_{\beta=1}^{\nu-1}i_{2\beta}-i_{2\beta-1}\right)+(j-i_{2\nu-1})\right]+
					\left[\left(\sum\limits_{\beta=1}^\mu i_{2\beta}-i_{2\beta-1}\right)+\mu+1\right]\right)}{(-1)^{\left[\left(\sum\limits_{\beta=1}^n i_{2\beta}-i_{2\beta-1}\right)+\nu\right]+
						\left[\left(\sum\limits_{\beta=1}^\mu i_{2\beta}-i_{2\beta-1}\right)+\mu+1\right]}}\\
				&=\dfrac{\text{sgn}\left(\left(\sum\limits_{\beta=1}^{\nu-1}i_{2\beta}-i_{2\beta-1}\right)+(j-i_{2\nu-1})\right)}{\text{sgn}\left(\left(\sum\limits_{\beta=1}^n i_{2\beta}-i_{2\beta-1}\right)+\nu\right)}
			\end{align*}
			\item 
			\begin{align*}
				\dfrac{M^0}{M_0}&=(-1)^{\left(\sum\limits_{\beta=1}^{\nu-1} -1 \right)+
					\left(\left[\sum\limits_{\beta=1}^{n} i_{2\beta}-i_{2\beta-1} \right]+
					\left[\sum\limits_{\beta=1}^{\nu-1}i_{2\beta}-i_{2\beta-1}+(j-i_{2\nu-1})\right]\right)+\left(\sum\limits_{\beta=\nu}^{n} -2 \right)}\\
				&=(-1)^{-(\nu-1)+\left(\left[\sum\limits_{\beta=1}^{n} i_{2\beta}-i_{2\beta-1} \right]+
					\left[\sum\limits_{\beta=1}^{\nu-1}i_{2\beta}-i_{2\beta-1}+(j-i_{2\nu-1})\right]\right)}
			\end{align*}
		\end{enumerate}
		Thus $\dfrac{r^0}{r_0} \cdot \dfrac{M^0}{M_0}=-1$.

		\begin{quote}
			Returning to our running example, it can be easily computed that the cofactor of $x_{5,11}$ is the product of 
			\begin{enumerate}
				\item $r_0=\text{sgn}\left(\left[\left(\sum\limits_{\beta=1}^4 i_{2\beta}-i_{2\beta-1}\right)+2\right]+
				\left[\left(\sum\limits_{\beta=1}^3 i_{2\beta}-i_{2\beta-1}\right)+4\right]\right)$
				\item $M_0=\det(A^P_{5,11}(\underline{5};12))$.
			\end{enumerate}
			Notice 
			$$A^P_{5,11}(\underline{5};12)={\tiny
				\left(
				\begin{array}{cccccccccccccccc}
					1 & 0 & x_{1,3} & 0       & 0 & x_{1,7} & 0       & 0       & x_{1,11}  & 0         & 0        & x_{1,15}  \\
					0 & 1 & x_{2,3} & 0       & 0 & x_{2,7} & 0       & 0       & x_{2,11}  & 0         & 0        & x_{2,15}  \\
					0 & 0 & 0       & 1       & 0 & x_{5,7} & 0       & 0       & x_{5,11}  & 0         & 0        & x_{5,15}  \\
					0 & 0 & 0       & 0       & 1 & x_{6,7} & 0       & 0       & x_{6,11}  & 0         & 0        & x_{6,15}  \\
					0 & 0 & 0       & 0       & 0 & 0       & 1       & 0       & x_{9,11}  & 0         & 0        & x_{9,15}  \\
					0 & 0 & 0       & 0       & 0 & 0       & 0       & 1       & x_{10,11} & 0         & 0        & x_{10,15} \\
					0 & 0 & 0       & 0       & 0 & 0       & 0       & 0       & 0         & 1         & 0        & x_{13,15} \\
					0 & 0 & 0       & 0       & 0 & 0       & 0       & 0       & 0         & 0         & 1        & x_{14,15} \\
					0 & 0 & 1       & x_{2,4} & 0 & 0       & x_{2,8} & 0       & 0         & x_{2,12}  & 0        & 0         \\
					0 & 0 & 0       & 0       & 0 & 1       & x_{6,8} & 0       & 0         & x_{6,12}  & 0        & 0         \\
					0 & 0 & 0       & 0       & 0 & 0       &       0 & 0       & 1         & x_{10,12} & 0        & 0         \\
					0 & 0 & 0       & 0       & 0 & 0       &       0 & 0       & 0         & 0         & 0        & 1         \\
				\end{array}\right)}$$
			whose determinant is
			$$\text{sgn}\left(\sum\limits_{\ell=1}^4\left(\left[\left(\sum\limits_{\beta=1}^4 i_{2\beta}-i_{2\beta-1}\right)+1\right]+
			\left[\left(\sum\limits_{\beta=1}^\ell i_{2\beta}-i_{2\beta-1}\right)+1\right]\right)\right).$$
			Thus $$r_0M_0\varphi_1^{(2,3)}(\widetilde{(3)})\varphi_1^{(2,3)}((3))$$ 
			is a term in $\det(A^P_{5,11})$.
			Furthermore, its sign is opposite of $r^0M^0\varphi_2^{(2,3)}(\widetilde{(3)})\varphi_1^{(2,3)}(3)$.
		\end{quote}    
		Returning to the proof, now let $\nu \leq m_1 < \mu=m_0$.
		The cofactor of $\varphi_2^{(j,k)}(m_0,m_1)=x(i_{2m_1+1}+1,k+1)$ is the product of 
		\begin{enumerate}
			\item $s^1=\text{sgn}\left(\left[\left(\sum\limits_{\beta=1}^{m_1} i_{2\beta}-i_{2\beta-1}\right)+1\right]+
			\left[\left(\sum\limits_{\beta=1}^\mu i_{2\beta}-i_{2\beta-1}\right)+\mu+1\right]\right)$,
			\item $\det(A^P_{j,k}(\overline{i_{2m_1}+1};k+1)$
		\end{enumerate}
		To compute $\det(A^P_{j,k}(\overline{i_{2m_1+1}+1};k+1)$, 
		take the cofactor expansion down the column labeled by $i_{2m_1}+1$.
		Notice that the cofactor of $x(i_{2\alpha},i_{2m_1})$ for $1 \leq \alpha \leq \nu-1$ is 0 for the same reasoning
		that the cofactor of $x(i_{2\alpha},k)$ is 0.
		
		However, the cofactor of $\varphi_2^{(j,k)}(\widetilde{(m_0,m_1)})=x(j,i_{2m_1+1})$ is the product of
		\begin{enumerate}
			\item $r^1=\text{sgn}\left(\left[\left(\sum\limits_{\beta=1}^{n} i_{2\beta}-i_{2\beta-1}\right)+\nu-1\right]+
			\left[\left(\sum\limits_{\beta=1}^{m_1} i_{2\beta}-i_{2\beta-1}\right)+m_1+1\right]\right)$ and
			\item $M^1=\det(A^P_{j,k}(\overline{i_{2m_1+1}+1},\underline{j};i_{2m_1+1}+1,k+1)).$
		\end{enumerate}
		To compute $M^1$, we will take the cofactor expansion of $A^P_{j,k}(\overline{i_{2m_1+1}+1},\underline{j};i_{2m_1+1}+1,k+1)$
		down the column labeled by $i_{2\alpha}+1$ for $1 \leq \alpha \leq n$ 
		in the order as $\alpha$ increases to get that $M^1$
		is the product of 
		\begin{enumerate}
			\item $\text{sgn}\left(\sum\limits_{\ell=1}^{m_1}
			\left(\left[\sum\limits_{\beta=1}^n i_{2\beta}-i_{2\beta-1}\right]+
			\left[\left(\sum\limits_{\beta=1}^\ell i_{2\beta}-i_{2\beta-1}\right)+1\right]\right)\right)$ and
			\item $\text{sgn}\left(\sum\limits_{\ell=m_1+1}^{n}
			\left(\left[\sum\limits_{\beta=1}^n i_{2\beta}-i_{2\beta-1}\right]+
			\left[\sum\limits_{\beta=1}^\ell i_{2\beta}-i_{2\beta-1}\right]\right)\right).$
		\end{enumerate}
		Thus $$s^1r^1M^1x(i_{2m_1+1}+1,k+1)x(j,i_{2m_1+1})=s^1r^1M^1\varphi_2^{(j,k)}(m_0,m_1)\varphi_2^{(j,k)}(\widetilde{(m_0,m_1)})$$
		is a term in $\det(A_{j,k})$.
		
		Similar reasoning shows that the cofactor of $x(i_{2m_1},k) \in \underline{A^P_{j,k}}$ is the product of
		\begin{enumerate}
			\item $s_1=\text{sgn}\left(\left[\left(\sum\limits_{\beta=1}^n i_{2\beta}-i_{2\beta-1}\right)+m_1+1\right]+
			\left[\left(\sum\limits_{\beta=1}^\mu i_{2\beta}-i_{2\beta-1}\right)+\mu+1\right]\right)$ and
			\item $\det(A^P_{j,k}(\underline{i_{2m_1}};k+1))$. 
		\end{enumerate}
		By taking the cofactor expansion along the column labeled by $i_{2m_1+1}$, 
		we see that the cofactor of $x(j+1,i_{2m_1}+1)$ and $x(i_{2\alpha+1}+1,i_{2m_1}+1)$ 
		for $\nu \leq \alpha < m_1$ are the only nonzero cofactors,
		using a similar argument used for computing the cofactor of $x(j,k)$ along the column labeled by $(k+1)$.
		The cofactor of $x(j+1,i_{2m_1}+1)$ is equal to
		$$r_1=\text{sgn}\left(\left[\left(\sum\limits_{\beta=1}^{\nu-1} i_{2\beta}-i_{2\beta-1}\right)+(j-i_{2\nu-1}+1)\right]+
		\left[\left(\sum\limits_{\beta=1}^{m_1} i_{2\beta}-i_{2\beta-1}\right)+m_1\right]\right)$$
		times $M_1=\det(A^P_{j,k}(\overline{i_{2\nu}+1},\underline{i_{2\nu}};k,k+1))$, and 
		$\det(A^P_{j,k}(\overline{i_{2\nu}+1},\underline{i_{2\nu}};k,k+1))$ is equal to the product
		\begin{enumerate}
			\item $\text{sgn}\left(\sum\limits_{\ell=1}^{\nu-1}
			\left(\left[\sum\limits_{\beta=1}^n i_{2\beta}-i_{2\beta-1}\right]+
			\left[\sum\limits_{\beta=1}^\ell i_{2\beta}-i_{2\beta-1}\right]+1\right)\right),$
			\item $\text{sgn}\left(\left[\left(\sum\limits_{\beta=1}^n i_{2\beta}-i_{2\beta-1}\right)\right]+\left[\left(\sum\limits_{\beta=1}^{\nu-1} i_{2\beta}-i_{2\beta-1}\right)+j-i_{2\nu-1}+1\right]\right)$ and
			\item $\text{sgn}\left(\sum\limits_{\substack{\ell=\nu\\\ell\neq m_1}}^{n}
			\left(
			\left[\sum\limits_{\beta=1}^n i_{2\beta}-i_{2\beta-1}\right]+
			\left[\sum\limits_{\beta=1}^\ell i_{2\beta}-i_{2\beta-1}\right]
			\right)\right).$
		\end{enumerate}
		Thus $$s_1r_1M_1x(i_{2m_1},k)x(j+1,i_{2m_1}+1)=s_1r_1M_1\varphi_2^{(j,k)}(m_0,m_1)\varphi_2^{(j,k)}(\widetilde{(m_0,m_1)})$$
		is a term in $\det(A^P_{j,k})$.

		\begin{quote}
			Returning to our running example, let $m_1=2$.
			The cofactor of $\varphi_2^{(2,3)}(3,2)=x(9,12)$ is the product of
			\begin{enumerate}
				\item $s^1=\text{sgn}\left(\left[\left(\sum\limits_{\beta=1}^{2} i_{2\beta}-i_{2\beta-1}\right)+1\right]+
				\left[\left(\sum\limits_{\beta=1}^3 i_{2\beta}-i_{2\beta-1}\right)+3+1\right]\right)$
				\item $\varphi_2^{(2,3)}(\widetilde{(3,2)})=x(5,8),$
				\item $r^1=\text{sgn}\left(\left[\left(\sum\limits_{\beta=1}^{4} i_{2\beta}-i_{2\beta-1}\right)+1\right]+
				\left[\left(\sum\limits_{\beta=1}^{2} i_{2\beta}-i_{2\beta-1}\right)+2+1\right]\right)$ and
				\item $M^1=\det(A^P_{5,12}(\underline{5},\overline{9};8,12)).$
			\end{enumerate}
			It is not difficult to see that $M^1$ is equal to the the product
			\begin{enumerate}
				\item $\text{sgn}\left(\sum\limits_{\ell=1}^{2}
				\left(\left[\sum\limits_{\beta=1}^4 i_{2\beta}-i_{2\beta-1}\right]+
				\left[\left(\sum\limits_{\beta=1}^\ell i_{2\beta}-i_{2\beta-1}\right)+1\right]\right)\right)$ and
				\item $\text{sgn}\left(\sum\limits_{\ell=3}^{4}
				\left(\left[\sum\limits_{\beta=1}^4 i_{2\beta}-i_{2\beta-1}\right]+
				\left[\sum\limits_{\beta=1}^\ell i_{2\beta}-i_{2\beta-1}\right]\right)\right).$
			\end{enumerate}
			The cofactor of $\varphi_1^{(2,3)}(3,2)=x(6,11)$ is the product of
			\begin{enumerate}
				\item $s_1=\text{sgn}\left(\left[\left(\sum\limits_{\beta=1}^4 i_{2\beta}-i_{2\beta-1}\right)+2+1\right]+
				\left[\left(\sum\limits_{\beta=1}^3 i_{2\beta}-i_{2\beta-1}\right)+3+1\right]\right)$,
				\item $\varphi_1^{(2,3)}(\widetilde{(3,2)})=x(6,7)$,
				\item $r_1=\text{sgn}\left(\left[\left(\sum\limits_{\beta=1}^{1} i_{2\beta}-i_{2\beta-1}\right)+2\right]+
				\left[\left(\sum\limits_{\beta=1}^{2} i_{2\beta}-i_{2\beta-1}\right)+2\right]\right)$
				\item $M_1=\det(A^P_{5,11}(\overline{8},\underline{6};11,12)).$
			\end{enumerate}
			It is not difficult to show that $M_1$ is the product of 
			\begin{enumerate}
				\item $\text{sgn}\left(\sum\limits_{\ell=1}^{1}
				\left(\left[\sum\limits_{\beta=1}^4 i_{2\beta}-i_{2\beta-1}\right]+
				\left[\sum\limits_{\beta=1}^\ell i_{2\beta}-i_{2\beta-1}\right]+1\right)\right),$
				\item $\text{sgn}\left(\left[\left(\sum\limits_{\beta=1}^4 i_{2\beta}-i_{2\beta-1}\right)
				\right]+\left[\left(\sum\limits_{\beta=1}^{1} i_{2\beta}-i_{2\beta-1}\right)+2\right]\right)$ and
				\item $\text{sgn}\left(\sum\limits_{\substack{\ell=3}}^{4}
				\left(\left[\sum\limits_{\beta=1}^4 i_{2\beta}-i_{2\beta-1}\right]+
				\left[\sum\limits_{\beta=1}^\ell i_{2\beta}-i_{2\beta-1}\right]\right)\right).$
			\end{enumerate}
		\end{quote}
		Returning to the proof, notice that 
		\begin{align*}
			&\dfrac{s_1}{r_0}=\text{sgn}\left(m_1+1-\nu\right),\\  
			&r_1=\text{sgn}\left(\left[\left(\sum\limits_{\beta=1}^{\nu-1} i_{2\beta}-i_{2\beta-1}\right)+(j-i_{2\nu-1}+1)\right]+
			\left[\left(\sum\limits_{\beta=1}^{m_1} i_{2\beta}-i_{2\beta-1}\right)+m_1\right]\right)\text{ and}\\
			&\dfrac{M_0}{M_1}=
			\text{sgn}\left(-(\nu-1)-\left(\sum\limits_{\beta=1}^{\nu-1}i_{2\beta}-i_{2\beta-1}\right)+(j-i_{2\nu-1}+1)-
			\left[\left(\sum\limits_{\beta=1}^{m_1}i_{2\beta}-i_{2\beta-1}\right)-2(n-(\mu-1))\right]\right).
		\end{align*}
		Thus, $$\dfrac{s_1}{r_0}\cdot r_1 \cdot \dfrac{M_0}{M_1}=1.$$
		In other words, each term in $r_0M_0\varphi_2^{(j,k)}(\widetilde{(m_0)})+s_1r_1M_1\varphi_2^{(j,k)}(\widetilde{(m_0,m_1)})$
		has the same sign.
		Similarly we can show that each term in $r^0M^0\varphi_2^{(j,k)}(\widetilde{(m_0)})+s^1r^1M^1\varphi_2^{(j,k)}(\widetilde{(m_0,m_1)})$
		has the same sign.
		
		Thus $\det(A^P_{j,k})$ contains
		$$\pm \left(\varphi_{1}^{(j,k)}(\mu)\varphi_{1}^{(j,k)}(\widetilde{(\mu)})+\varphi_{1}^{(j,k)}(\mu,m_1)\varphi_{1}^{(j,k)}(\widetilde{(\mu,m_1)})-
		\varphi_{2}^{(j,k)}(\mu)\varphi_{2}^{(j,k)}(\widetilde{(\mu)})-\varphi_{2}^{(j,k)}(\mu,m_1)\varphi_{2}^{(j,k)}(\widetilde{(\mu,m_1)})\right).$$
		
		The rest follows from induction.
	\end{proof}
	
	\begin{case}
		Assume that $\mathcal{A}(P)\neq\emptyset.$ Then
		\label{det}
		$\det(A^P_{j,k})=\pm \widehat{D^P_{j,k}}$.
	\end{case}
	\begin{proof}
		Suppose that $\mathcal{A}(P)=\{i_{2\beta_1}<\dots<i_{2\beta_s}\}.$ 
		Define $$\mathcal{A}^P_\alpha=\begin{cases}
			|\{\ell:i_{2\ell} \notin \mathcal{A}(P) \text{ and }
			\ell > \alpha\}| &\text{ if $\alpha < n$ and $i_{2\alpha} \in \mathcal{A}(P)$}\\
			0 &\text{ otherwise.}
		\end{cases}$$
		
		Then the cofactor of $x_{j,k}$ is
		$$\text{sgn}\left(\left[\left(\sum\limits_{\beta=1}^n i_{2\beta}-i_{2\beta-1}\right)+\nu\right]+
		\left[\left(\sum\limits_{\beta=1}^\mu i_{2\beta}-i_{2\beta-1}\right)+\mu+1\right]\right)\det\left(A^P_{j,k}(\underline{j};k+1)\right),$$
		as before. 
		However $\det\left(A^P_{j,k}(\underline{j};k+1)\right)$ is different. 
		In this case, $\det\left(A^P_{j,k}(\underline{j};k+1)\right)$ is equal to
		$$\text{sgn}\left(\sum\limits_{\ell=1}^n\left(\left[\left(\sum\limits_{\beta=1}^n i_{2\beta}-i_{2\beta-1}\right)+1+\mathcal{A}^P_\ell\right]+
		\left[\left(\sum\limits_{\beta=1}^\ell i_{2\beta}-i_{2\beta-1}\right)+1\right]\right)\right).$$
		
		The cofactor of $y(i_{2\beta_r}+1,k+1)$ for some $r \in [1,s]$ is 
		$$\text{sgn}\left(\left[\left(\sum\limits_{\beta=1}^n i_{2\beta}-i_{2\beta-1}\right)+(n+1)-|\mathcal{A}(P)|+r\right]+
		\left[\left(\sum\limits_{\beta=1}^\mu i_{2\beta}-i_{2\beta-1}\right)+\mu+1\right]\right)$$
		times $\det\left(A^P_{j,k}((i_{2\beta}+1)';k+1)\right).$
		The reader can readily see that $A^P_{j,k}((i_{2\beta}+1)';k+1)=A^{P'}_{j,i_{2\beta}}$ where
		$P' \in \mathcal{I}_{n-1}$ is obtained from $P$ by removing the entries $(i_{2\beta-1},i_{2\beta})$
		for every $i_{2\beta}\in \mathcal{A}(P)$. 
		We have already shown that $\det(A^{P'}_{j,i_{2\beta}})=\pm D^{P'}_{j,i_{2\beta}}$ if $\mathcal{L}^{P'}(j,i_{2\beta}) \neq \emptyset$,
		and $\det(A^{P'}_{j,i_{2\beta}})=0$ if $\mathcal{L}^{P'}(j,i_{2\beta})=\emptyset$.
		Thus,
		$$\pm y(i_{2\beta_r}+1,k+1)D^{P'}_{j,i_{2\beta}}$$
		is a term of $\det(A_{j,k}^P)$.
		
		\begin{quote}
			Consider the case where $P=(0,2,3,3,4,6,8,10,12,14)$ and $(j,k)=(5,11)$.
			Let $P'=(0,2,4,6,8,10,12,14)$.
			Then
			$$A^P_{5,11}={\tiny
				\left(
				\begin{array}{ccccccccccccccccc}
					1 & 0 & x_{1,3} & x_{1,4} & 0       & 0 & x_{1,7} & 0       & 0       & x_{1,11}  & x_{1,12}  & 0         & 0        & x_{1,15}  \\
					0 & 1 & x_{2,3} & x_{2,4} & 0       & 0 & x_{2,7} & 0       & 0       & x_{2,11}  & x_{2,12}  & 0         & 0        & x_{2,15}  \\
					0 & 0 & 0       & 0       & 1       & 0 & x_{5,7} & 0       & 0       & x_{5,11}  & x_{5,12}  & 0         & 0        & x_{5,15}  \\
					0 & 0 & 0       & 0       & 0       & 1 & x_{6,7} & 0       & 0       & x_{6,11}  & x_{6,12}  & 0         & 0        & x_{6,15}  \\
					0 & 0 & 0       & 0       & 0       & 0 & 0       & 1       & 0       & x_{9,11}  & x_{9,12}  & 0         & 0        & x_{9,15}  \\
					0 & 0 & 0       & 0       & 0       & 0 & 0       & 0       & 1       & x_{10,11} & x_{10,12} & 0         & 0        & x_{10,15} \\
					0 & 0 & 0       & 0       & 0       & 0 & 0       & 0       & 0       & 0         & 0         & 1         & 0        & x_{13,15} \\
					0 & 0 & 0       & 0       & 0       & 0 & 0       & 0       & 0       & 0         & 0         & 0         & 1        & x_{14,15} \\
					0 & 0 & 1       & x_{2,3} & x_{2,4} & 0 & 0       & x_{2,8} & 0       & 0         & x_{2,11}  & x_{2,12}  & 0        & 0         \\
					0 & 0 & 0       & 0       & 0       & 1 & 0       & x_{5,8} & 0       & 0         & x_{5,11}  & x_{5,12}  & 0        & 0         \\
					0 & 0 & 0       & 0       & 0       & 0 & 1       & x_{6,8} & 0       & 0         & x_{6,11}  & x_{6,12}  & 0        & 0         \\
					0 & 0 & 0       & 0       & 0       & 0 & 0       &       0 & 0       & 1         & x_{10,11} & x_{10,12} & 0        & 0         \\
					0 & 0 & 0       & 0       & 0       & 0 & 0       &       0 & 0       & 0         & 0         & 0         & 0        & 1         \\
					0 & 0 & 0       & 1       & 0       & 0 & 0       &       0 & 0       & 0         & y_{4,12}  & 0         & 0        & 0         \\
				\end{array}\right)}.$$
			By taking the cofactor expansion down the column labeled by $k+1$, 
			we see that the cofactor $y(4,12)$ is the determinant of 
			$$A^P_{5,11}(4';12)={\tiny
				\left(
				\begin{array}{ccccccccccccccccc}
					1 & 0 & x_{1,3} & x_{1,4} & 0       & 0 & x_{1,7} & 0       & 0       & x_{1,11}  & 0         & 0        & x_{1,15}  \\
					0 & 1 & x_{2,3} & x_{2,4} & 0       & 0 & x_{2,7} & 0       & 0       & x_{2,11}  & 0         & 0        & x_{2,15}  \\
					0 & 0 & 0       & 0       & 1       & 0 & x_{5,7} & 0       & 0       & x_{5,11}  & 0         & 0        & x_{5,15}  \\
					0 & 0 & 0       & 0       & 0       & 1 & x_{6,7} & 0       & 0       & x_{6,11}  & 0         & 0        & x_{6,15}  \\
					0 & 0 & 0       & 0       & 0       & 0 & 0       & 1       & 0       & x_{9,11}  & 0         & 0        & x_{9,15}  \\
					0 & 0 & 0       & 0       & 0       & 0 & 0       & 0       & 1       & x_{10,11} & 0         & 0        & x_{10,15} \\
					0 & 0 & 0       & 0       & 0       & 0 & 0       & 0       & 0       & 0         & 1         & 0        & x_{13,15} \\
					0 & 0 & 0       & 0       & 0       & 0 & 0       & 0       & 0       & 0         & 0         & 1        & x_{14,15} \\
					0 & 0 & 1       & x_{2,3} & x_{2,4} & 0 & 0       & x_{2,8} & 0       & 0         & x_{2,12}  & 0        & 0         \\
					0 & 0 & 0       & 0       & 0       & 1 & 0       & x_{5,8} & 0       & 0         & x_{5,12}  & 0        & 0         \\
					0 & 0 & 0       & 0       & 0       & 0 & 1       & x_{6,8} & 0       & 0         & x_{6,12}  & 0        & 0         \\
					0 & 0 & 0       & 0       & 0       & 0 & 0       &       0 & 0       & 1         & x_{10,12} & 0        & 0         \\
					0 & 0 & 0       & 0       & 0       & 0 & 0       &       0 & 0       & 0         & 0         & 0        & 1         \\
				\end{array}\right)}.$$
			By taking the cofactor expansion down the column labeled by $i_4+1=4$, 
			we see that the cofactor of $x_{2,3}$ is $\text{sgn}(9+3)\det(A^P_{5,11}(4';12))=0$.
			Also notice that $\mathcal{L}^{P}(5,4)=\emptyset$.
			Thus $\widehat{D^P}_{5,4}=D^{P'}_{5,4}.$
			
			If $P=(0,2,4,6,7,7,8,10,12,14)$ and $(j,k)=(5,11)$, then
			$$A^P_{5,11}={\tiny
				\left(
				\begin{array}{ccccccccccccccccc}
					1 & 0 & x_{1,3} & 0       & 0 & x_{1,7} & x_{1,8} & 0       & 0       & x_{1,11}  & x_{1,12}  & 0         & 0        & x_{1,15}  \\
					0 & 1 & x_{2,3} & 0       & 0 & x_{2,7} & x_{2,8} & 0       & 0       & x_{2,11}  & x_{2,12}  & 0         & 0        & x_{2,15}  \\
					0 & 0 & 0       & 1       & 0 & x_{5,7} & x_{5,8} & 0       & 0       & x_{5,11}  & x_{5,12}  & 0         & 0        & x_{5,15}  \\
					0 & 0 & 0       & 0       & 1 & x_{6,7} & x_{6,8} & 0       & 0       & x_{6,11}  & x_{6,12}  & 0         & 0        & x_{6,15}  \\
					0 & 0 & 0       & 0       & 0 & 0       &       0 & 1       & 0       & x_{9,11}  & x_{9,12}  & 0         & 0        & x_{9,15}  \\
					0 & 0 & 0       & 0       & 0 & 0       &       0 & 0       & 1       & x_{10,11} & x_{10,12} & 0         & 0        & x_{10,15} \\
					0 & 0 & 0       & 0       & 0 & 0       &       0 & 0       & 0       & 0         & 0         & 1         & 0        & x_{13,15} \\
					0 & 0 & 0       & 0       & 0 & 0       &       0 & 0       & 0       & 0         & 0         & 0         & 1        & x_{14,15} \\
					0 & 0 & 1       & x_{2,4} & 0 & 0       & x_{2,7} & x_{2,8} & 0       & 0         & x_{2,11}  & x_{2,12}  & 0        & 0         \\
					0 & 0 & 0       & 0       & 1 & 0       & x_{5,7} & x_{5,8} & 0       & 0         & x_{5,11}  & x_{5,12}  & 0        & 0         \\
					0 & 0 & 0       & 0       & 0 & 1       & x_{6,7} & x_{6,8} & 0       & 0         & x_{6,11}  & x_{6,12}  & 0        & 0         \\
					0 & 0 & 0       & 0       & 0 & 0       & 0       &       0 & 0       & 1         & x_{10,11} & x_{10,12} & 0        & 0         \\
					0 & 0 & 0       & 0       & 0 & 0       & 0       &       0 & 0       & 0         & 0         & 0         & 0        & 1         \\
					0 & 0 & 0       & 0       & 0 & 0       & 1       &       0 & 0       & 0         & y_{8,12}  & 0         & 0        & 0         \\
				\end{array}\right)}.$$
			Notice that the cofactor of $y_{8,12}$ is $\text{sgn}(14+11)\det(A^P_{5,11}(8';12)=-\det(A^{P'}_{5,7})$.
		\end{quote}
		In order to see that $D_{j,k}^{P'}$ and $y(i_{2\beta_r}+1,k+1)D_{j,i_{2\beta_r}}^{P'}$ have opposite signs,
		it is enough to show that the sign of $x_{j,k}$ and of $y(i_{2\beta}+1,k+1)x(j,i_{2\beta_r})$ in $\widehat{D^P_{j,k}}$
		have opposite signs.
		
		From our previous work we know that the coefficient of $x(j,i_{2\beta_r})$ in $D^{P'}_{j,i_{2\beta_r}}$ is
		$$\text{sgn}\left(\left[\left(\sum\limits_{\beta=1}^n i_{2\beta}-i_{2\beta-1}\right)+\nu\right]
		+\left[\left(\sum\limits_{\beta=1}^{\beta_r} i_{2\beta}-i_{2\beta-1}\right)+\beta_r\right]\right)
		\det\left(A_{j,i_{2\beta_r}}^{P'}(\underline{j};i_{2\beta_r})\right)$$
		where $\det\left(A_{j,i_{2\beta_r}}^{P'}(\underline{j};i_{2\beta_r})\right)$ is
		$$\text{sgn}\left(\sum\limits_{\ell\neq \beta_r}\left(\left[\left(\sum\limits_{\beta=1}^n i_{2\beta}-i_{2\beta-1}\right)+1+\mathcal{A}^P_\ell\right]+
		\left[\left(\sum\limits_{\beta=1}^\ell i_{2\beta}-i_{2\beta-1}\right)+1\right]\right)\right).$$
		Then, by multiplying together and simplifying, we get
		$$\text{sgn}(n-|\mathcal{A}(P)|+r+\beta_r+\mathcal{A}_{\beta_r}^P+1).$$
		Note that $\mathcal{A}^P_{\beta_r}=|\{i_{2\ell}:\ell > \beta_r\}| + |\{i_{2s} \in \mathcal{A}(P): s > r\}|.$
		Then
		$$\text{sgn}(n-|\mathcal{A}(P)|+r+\beta_r+\mathcal{A}_{\beta_r}^P+1)=\text{sgn}(1)=-1.$$
		Thus Lemma \ref{det} is proved.
	\end{proof}

	\subsection{Proof of $\mathcal{J}(P)\subseteq\mathcal{K}(P)$}
	
	\begin{proof}
		Let $S_r=\{\underline{\gamma}: 1 \leq \beta \leq n \text{ and } i_{2\beta-1}+1 \leq \gamma \leq i_{2\beta}-1\}$ and 
		$S_c=\{\gamma: 1 \leq \beta \leq n \text{ and } i_{2\beta}+2 \leq \gamma \leq i_{2\beta+1}\}\cup\{m-2\}$.
		
		Let $S_r',S_c' \subseteq [1,m-3]$ such that $|S_r'|=|S_r|-1$ and $|S_c'|=|S_c|-1$.
		Notice that $|S_r|=\left(\sum\limits_{\ell=1}^n i_{2\ell}+i_{2\ell-1}\right)-(n-|\mathcal{A}(P)|)$
		and $|S_c|=\left(\sum\limits_{\ell=1}^n i_{2\ell+1}-i_{2\ell}\right)-n+1$.
		Define $N_1(S_r';S_c')$ to be the $e_1(P)\times e_1(P)$ submatrix obtained from 
		the $\left(2\left(\sum\limits_{\ell=1}^n i_{2\ell}+i_{2\ell-1}\right)+|\mathcal{A}(P)|\right) \times (m-2)$ matrix $N_1(P)$ 
		by removing the rows labeled by $S_r'$ and the columns labeled by $S_c'$.
		
		Using the standard method of finding (quadratic) relations between minors, we get
		$$\det\left(N_1(S_r';S_c')\right)=
		\sum\limits_{(j,k) \in JK}\pm 
		\det\left(N_1(S_r-\{\underline{j}\};S_c-\{k+1\})\right)\det\left(N_1(S_r'\cup \{\underline{j}\};S_c'\cup \{k+1\})\right).$$
		Notice that $A^P_{j,k}=N_1(S_r-\{\underline{j}\};S_c-\{k+1\})$.
		Thus every $(e_1(P)+1)\times(e_1(P)+1)$ minor of $N_1(P)$ can be generated by $\{\widehat{D_{j,k}^{P}}:(j,k) \in JK\}$.
	\end{proof}

	\begin{example}
		Consider $m-3=7$ and $P=(0,2,4,6)$.
		In this case, $N_1(P)$ is
		$${\tiny
			\left(
			\begin{array}{ccccccccccccccccc}
				1 & 0 & x_{1,3} & x_{1,4} & 0       & 0 & x_{1,7} & 0\\
				0 & 1 & x_{2,3} & x_{2,4} & 0       & 0 & x_{2,7} & 0\\
				0 & 0 & 0       & 0       & 1       & 0 & x_{5,7} & 0\\
				0 & 0 & 0       & 0       & 0       & 1 & x_{6,7} & 0\\
				0 & 1 & 0       & x_{1,3} & x_{1,4} & 0 & 0       & x_{1,7} \\
				0 & 0 & 1       & x_{2,3} & x_{2,4} & 0 & 0       & x_{2,7} \\
				0 & 0 & 0       & 0       & 0       & 1 & 0       & x_{5,7} \\
				0 & 0 & 0       & 0       & 0       & 0 & 1       & x_{6,7} \\
			\end{array}\right)}.
		$$
		Then
		\begin{align*}
			\det\left(N_1(\underline{6};1)\right)=&(-x_{5,7}-x_{6,7}^2)(x_{1,4}-x_{1,3}x_{2,3})(x_{2,3}x_{2,4}x_{5,7}+x_{1,4}x_{5,7}+x_{2,7})\\
			&+(x_{1,7}+x_{2,3}x_{2,7}+x_{2,7}x_{6,7}+x_{1,4}x_{5,7}x_{6,7}+x_{2,3}x_{2,4}x_{5,7}x_{6,7})(-x_{1,4}x_{6,7}+x_{1,3}x_{2,3}x_{6,7})\\
			&-(-x_{1,3}+x_{2,4}-x_{2,3}^2)(-x_{1,3}x_{2,7}x_{6,7}+x_{2,4}x_{5,7}^2+x_{1,7}x_{5,7})\\
			=&\det\left(N_1(\underline{1};4)\right)\det\left(N_1(\underline{5},\underline{6};1,8)\right)
			+\det\left(N_1(\underline{5};4)\right)\det\left(N_1(\underline{1},\underline{6};1,8)\right)\\
			&-\det\left(N_1(\underline{5};8)\right)\det\left(N_1(\underline{1},\underline{6};1,4)\right)\\
			=&\det\left(A_{5,7}\right)\det\left(N_1(\underline{5},\underline{6};1,8)\right)
			+\det\left(A_{1,7}\right)\det\left(N_1(\underline{1},\underline{6};1,8)\right)\\
			&-\det\left(A_{1,3}\right)\det\left(N_1(\underline{1},\underline{6};1,4)\right).
		\end{align*}
	\end{example}

\end{document}